\tikzset{
  commutative diagrams/.cd,
  arrow style=tikz,
  diagrams={>=stealth}
}
\theoremstyle{theorem}
\newenvironment{customthm}[1]
  {\innercustomthm}
  {\endinnercustomthm}
\def\@tocline#1#2#3#4#5#6#7{\relax
  \ifnum #1>\c@tocdepth 
  \else
    \par \addpenalty\@secpenalty\addvspace{#2}%
    \begingroup \hyphenpenalty\@M
    \@ifempty{#4}{%
      \@tempdima\csname r@tocindent\number#1\endcsname\relax
    }{%
      \@tempdima#4\relax
    }%
    \parindent\z@ \leftskip#3\relax \advance\leftskip\@tempdima\relax
    \rightskip\@pnumwidth plus4em \parfillskip-\@pnumwidth
    #5\leavevmode\hskip-\@tempdima
      \ifcase #1
       \or\or \hskip 1em \or \hskip 2em \else \hskip 3em \fi%
      #6\nobreak\relax
    \dotfill\hbox to\@pnumwidth{\@tocpagenum{#7}}\par
    \nobreak
    \endgroup
  \fi}
\newcounter{marginnote}
\DeclareMathAlphabet{\mathpzc}{OT1}{pzc}{m}{it}
\DeclareRobustCommand{\cev}[1]{%
  \mathpalette\do@cev{#1}%
}
\newcommand{\do@cev}[2]{%
  \fix@cev{#1}{+}%
  \reflectbox{$\m@th#1\vec{\reflectbox{$\fix@cev{#1}{-}\m@th#1#2\fix@cev{#1}{+}$}}$}%
  \fix@cev{#1}{-}%
}
\newcommand{\fix@cev}[2]{%
  \ifx#1\displaystyle
    \mkern#23mu
  \else
    \ifx#1\textstyle
      \mkern#23mu
    \else
      \ifx#1\scriptstyle
        \mkern#22mu
      \else
        \mkern#22mu
      \fi
    \fi
  \fi
}
\theoremstyle{theorem}
\newtheorem{theorem}{Theorem}[section]
\newtheorem{corollary}[theorem]{Corollary}
\newtheorem{lemma}[theorem]{Lemma}
\newtheorem{proposition}[theorem]{Proposition}
\newtheorem{specification}[theorem]{Specification}
\theoremstyle{definition}
\newtheorem{remark}[theorem]{Remark}
\newtheorem*{runningexample*}{Running example}
\newtheorem*{aside*}{Aside}
\newtheorem{definition}[theorem]{Definition}
\newtheorem{proposition-definition}[theorem]{Proposition-Definition}
\newcommand{\RR}{\mathbb{R}}
\newcommand{\olM}{\overline{M}}
\newcommand{\Gm}{\mathbb{G}_{\operatorname{m}}}
\newcommand{\ol}[1]{\overline{#1}}
\newcommand{\bcd}{\begin{center}\begin{tikzcd}}
\newcommand{\ecd}{\end{tikzcd}\end{center}}
\newcommand{\Aaff}{\mathbb{A}}
\newcommand{\PP}{\mathbb{P}}
\newcommand{\OO}{\mathcal{O}}
\newcommand{\N}{\mathbb{N}}
\newcommand{\Z}{\mathbb{Z}}
\newcommand{\Q}{\mathbb{Q}}
\newcommand{\R}{\mathbb{R}}
\newcommand{\Speck}{\operatorname{Spec}\kfield}
\newcommand{\kfield}{\Bbbk}
\newcommand{\Bcal}{\mathcal{B}}
\newcommand{\Acal}{\mathcal{A}}
\newcommand{\Dcal}{\mathcal{D}}
\newcommand{\Ccal}{\mathcal{C}}
\newcommand{\Mfrak}{\mathfrak{M}}
\newcommand{\Mlog}{\overline{M}_{0,\upalpha}(\PP^r|H)}
\newcommand{\Mbar}{\ol{M}}
\newcommand{\ClQ}{\mathsf{Cl}_{\mathbb{Q}}}
\newcommand{\PicQ}{\mathsf{Pic}_{\mathbb{Q}}}
\newcommand{\Trop}{\Sigma}
\newcommand{\Mbarn}{\ol{{M}}_{0,n}}
\definecolor{FigureRed}{HTML}{EF4733}
\definecolor{FigureBlue}{HTML}{2473F6}
\begin{document}
 
\title{Divisors and curves on logarithmic mapping spaces}
\author{Patrick Kennedy-Hunt, Navid Nabijou, Qaasim Shafi, Wanlong Zheng}

\begin{abstract}
We determine the rational class and Picard groups of the moduli space of stable logarithmic maps in genus zero, with target projective space relative a hyperplane. For the class group we exhibit an explicit basis consisting of boundary divisors. For the Picard group we exhibit a spanning set indexed by piecewise-linear functions on the tropicalisation. In both cases a complete set of boundary relations is obtained by pulling back the WDVV relations from the space of stable curves. Our proofs hinge on a controlled technique for manufacturing test curves in logarithmic mapping spaces, opening up the topology of these spaces to further study.
\end{abstract}

\maketitle
\tableofcontents

\section*{Introduction}

\noindent Fix integers $r \geqslant 2,n \geqslant 3, d \geqslant 1$ and a non-negative ordered partition $\upalpha =(\upalpha_1,\ldots,\upalpha_n) \vdash d$. This discrete data determines a moduli space
\[ \Mlog \]
of stable logarithmic maps, where $H \subseteq \PP^r$ is a hyperplane \cite{GrossSiebertLog,ChenLog,AbramovichChenLog}. It is a logarithmically smooth Deligne--Mumford stack, with dense interior parametrising degree $d$ maps $f \colon \PP^1 \to \PP^r$ satisfying
\[ f^\star H = \Sigma_{i=1}^n \upalpha_i x_i\]
where $x_1,\ldots,x_n \in \PP^1$ are marked points (note that $\upalpha$ determines both $n$ and $d$). We investigate the divisor theory of this space.

\subsection{Faithful tropicalisation} As a preliminary result, we identify the boundary complex of $\Mlog$ with a moduli space of stable tropical maps.

\begin{customthm}{X}[Theorem~\ref{thm: faithful tropicalisation}] \label{thm: faithful tropicalisation introduction} There is a natural isomorphism of cone complexes
\[ \Sigma\Mlog = \Sigma_{0,\upalpha} \]
where $\Sigma \Mlog$ is the tropicalisation of the moduli space of stable logarithmic maps, and $\Sigma_{0,\upalpha}$ is the moduli space of stable tropical maps to $\R_{\geqslant 0}$ (see Section~\ref{sec: tropical moduli}).	
\end{customthm}
In particular, the number $N(\upalpha)$ of irreducible boundary divisors in $\Mlog$ is equal to the number of one-dimensional combinatorial types of stable tropical map to $\R_{\geqslant 0}$ with tangency profile $\upalpha$. Calculating $N(\upalpha)$ for specific values of $\upalpha$ is a combinatorial exercise; see Section~\ref{sec: enumerating boundary strata} for a discussion in the case $\upalpha=(d,0,\ldots,0)$.

\subsection{Class group} We begin with Weil divisors. The key geometric input is a method for manufacturing test curves in the moduli space with strongly controlled intersection against the boundary.

\begin{customthm}{Y}[Theorem~\ref{thm: main}] \label{thm: main introduction} The rational class group of $\Mlog$ has an explicit basis consisting of boundary divisors. It has dimension
\[ N(\upalpha) - {n-1 \choose 2} + 1. \]
In particular, the dimension of the rational class group does not depend on $r$. Moreover, all relations between boundary divisors are pulled back from the WDVV relations on $\Mbarn$.
\end{customthm}
In the above formula, $N(\upalpha)$ counts the number of irreducible boundary divisors (see Theorem~\ref{thm: faithful tropicalisation introduction}), while the remaining terms count the number of independent linear relations between these divisors.

\subsection{Picard group} \label{sec: class group not Picard} The space $\Mlog$ is logarithmically smooth, i.e. it has toric singularities. It is almost never smooth, not even as an orbifold; consider e.g. the following combinatorial type of stable tropical map, with $\upalpha=(2,0,0)$
\[
\begin{tikzpicture}[scale=0.95]
\draw[fill=black] (-2,0.75) circle[radius=2pt];
\draw[blue] (-2,0.75) node[left]{\scriptsize$1$};

\draw (-2,0.75) -- (-0.5,0.75);
\draw (-1.25,0.75) node[above]{\scriptsize$e_1$};

\draw[fill=black] (-0.5,0.75) circle[radius=2pt];
\draw[blue] (-0.5,0.75) node[below]{\scriptsize$0$};

\draw[->] (-0.5,0.75) -- (-0.5,1.25);
\draw[red] (-0.5,1.2) node[above]{\scriptsize$0$};

\draw (-0.5,0.75) -- (1,0);
\draw (0.35,0.3) node[above]{\scriptsize$e_2$};

\draw[fill=black] (-2,-0.75) circle[radius=2pt];
\draw[blue] (-2,-0.75) node[left]{\scriptsize$1$};

\draw (-2,-0.75) -- (-1,-0.75);
\draw (-1.5,-0.75) node[above]{\scriptsize$e_3$};

\draw[fill=black] (-1,-0.75) circle[radius=2pt];
\draw[blue] (-1,-0.75) node[below]{\scriptsize$0$};
\draw[->] (-1,-0.75) -- (-1,-0.25);
\draw[red] (-1,-0.3) node[above]{\scriptsize$0$};

\draw (-1,-0.75) -- (1,0);
\draw (-0.1,-0.4) node[above]{\scriptsize$e_4$};

\draw[fill=black] (1,0) circle[radius=2pt];
\draw[blue] (1,0) node[below]{\scriptsize$0$};

\draw[->] (1,0) -- (1.75,0);
\draw[red] (1.7,0) node[right]{\scriptsize$2$};

\draw[fill=blue,blue] (-2,-2.25) circle[radius=2pt];
\draw[blue,->] (-2,-2.25) -- (1.75,-2.25);
\draw[blue] (1.75,-2.25) node[right]{\scriptsize$H$};

\draw[blue,->] (0,-1) -- (0,-1.75);
\draw[blue] (0,-1.3) node[right]{\scriptsize$\mathsf{f}$};

\end{tikzpicture}
\]
The edge lengths satisfy $e_1+e_2=e_3+e_4$ and so the local singularity type of $\Mlog$ is given by the quadric cone
\[ V(z_1 z_2 - z_3 z_4) \subseteq \Aaff^{\! 4}_{\underline{z}}.\]
It follows that $\Mlog$ is not even $\Q$-factorial, and most boundary components are not even $\Q$-Cartier. This contrasts sharply with the space $\ol{M}_{0,n}(\PP^r,d)$ of ordinary stable maps.


Consequently, the class group and the Picard group of $\Mlog$ differ. We leverage our complete understanding of the former to obtain a combinatorial description of the latter.

\begin{customthm}{Z}[Theorem~\ref{thm: Picard}] \label{thm: Picard introduction} The rational Picard group of $\Mlog$ is generated by divisors corresponding to integral piecewise-linear functions on $\Sigma_{0,\upalpha}$. A complete set of relations between these divisors is obtained by pulling back the WDVV relations from $\Mbarn$.	
\end{customthm}

\subsection{Strategy} Theorems~\ref{thm: main introduction} and \ref{thm: Picard introduction} are analogous to \cite[Theorem~2]{PandPic} (see also \cite{Oprea} for a more general calculation). The basic strategy is similar: to probe divisors in the moduli space by constructing appropriate test curves
\[ \PP^1 \to \Mlog \]
with controlled intersection against the boundary. The details, however, are different. We do not start with an arbitrary test curve and then modify it. Instead, we directly manufacture a roster of test curves which is sufficiently rich and sufficiently controlled to establish the required linear independences. This alternative approach leads directly to an explicit basis for the class group. The test curve construction is considerably more intricate than that of \cite{PandPic} due to the combinatorial complexity of boundary divisors in the logarithmic mapping space, and the strong control we maintain over specific boundary intersections.

The proof proceeds as follows. We first use excision to show that the class group is generated by boundary divisors (Proposition~\ref{prop: generated by boundary strata}).\footnote{In contrast, boundary divisors are insufficient to generate the class group of $\overline{M}_{0,n}(\PP^r,d)$, see Remark~\ref{rmk: absolute stable maps not generated by boundary divisors}.} We then partition these into \textbf{aliens}, \textbf{airbornes} and \textbf{terrestrials} (Definition~\ref{def: alien and terrestrial}). An alien divisor has generic point parametrising a curve with two irreducible components, one of which contains two markings and has degree $0$, the other of which contains all the other markings and has degree $d$. The airborne divisor has generic point parametrising a smooth curve mapped inside $H$. Boundary divisors which are neither alien nor airborne are referred to as terrestrial.

A consequence of our test curve construction (Section~\ref{sec: test curves}) is that for every terrestrial divisor $D$, there exists a test curve which intersects $D$ and does not intersect any other terrestrial divisor. It follows that the terrestrial divisors form a linearly independent subset of the class group. Routine surgery on test curves then shows that the same is true for the set of terrestrial and airborne divisors (Proposition~\ref{prop: airborne and terrestrial linearly independent set}).

Quotienting, we are left with a space spanned (but not based) by the alien divisors. We show that this quotient has dimension $n$, by establishing upper and lower bounds (Propositon~\ref{prop: dim of Q}). The upper bound is obtained by studying pullbacks of relations from the space of stable curves. The lower bound is obtained by exhibiting a linearly independent set \eqref{eqn: linearly independent set in Q} of alien divisors of size $n$. As before, linear independence is demonstrated by intersecting with suitable test curves.

This shows that the class group has a basis consisting of the airborne divisor, the terrestrial divisors, and the alien divisors listed in \eqref{eqn: linearly independent set in Q}. As a corollary we conclude that all relations amongst boundary divisors are pulled back from the WDVV relations on $\Mbarn$. This establishes Theorem~\ref{thm: main introduction}.

The Picard group embeds in the class group as the set of locally principal divisors. Since the class group is generated by boundary divisors, the Picard group is generated by boundary Cartier divisors, and these are indexed by piecewise-linear functions on the tropicalisation. Finally, all relations in the Picard group come from relations in the class group and hence, by Theorem~\ref{thm: main introduction}, from relations on $\Mbarn$. This establishes Theorem~\ref{thm: Picard introduction}.

It is perhaps surprising that there exist enough test curves to establish all linear independences between divisors. Our construction of test curves shows that
\begin{equation} \dim A_1(\Mlog)_{\Q} \geqslant \dim A_{n-1}(\Mlog)_{\Q}.\end{equation}
There is no a priori reason why this should hold, since Poincar\'e duality can fail for varieties with toric singularities, see \cite[Example~4.2]{KatzPayne} and \cite[Section~1]{TotaroToric}.

\subsection{Outlook} We expect the test curve construction of Section~\ref{sec: test curves} and the linear independence arguments of Section~\ref{sec: relations} to open up the study of the topology of logarithmic mapping spaces.\medskip

\noindent \textbf{Problem 0.1.} Investigate piecewise-linear functions on $\Sigma_{0,\upalpha}$.\label{problem Picard}

Theorem~\ref{thm: Picard introduction} is less explicit than Theorem~\ref{thm: main introduction}, because on $\Sigma_{0,\upalpha}$ it is harder to enumerate piecewise-linear functions than it is to enumerate rays. A more conceptual description of these piecewise-linear functions is desirable, e.g. in terms of ``tautological'' functions arising from the modular interpretation of $\Sigma_{0,\upalpha}$.\medskip

\noindent \textbf{Problem 0.2.} Investigate divisors on spaces of prestable logarithmic maps to the Artin fan $[\Aaff^{\!1}/\Gm]$.

A minor modification of our test curve construction (simply dropping Section~\ref{sec: constructing f1 to fm}) also produces test curves in such spaces. Boundary relations on $\Mfrak_{0,n}$ will likely play a role \cite{BaeSchmitt1,BaeSchmitt2}. New ideas are required to handle curve components with negative degrees. This will also open up the divisor theory of spaces of logarithmic maps to general smooth pairs.\medskip

\noindent \textbf{Problem 0.3.} Investigate divisors on $\overline{M}_{0,\upalpha}(\PP^r|H_0+\ldots+H_k)$ for $H_0+\ldots+H_k$ a subset of the toric boundary.

Such spaces are also logarithmically smooth. The test curve construction must be modified, for which a systematic understanding of the possible shapes of boundary divisors is required, analogous to Proposition~\ref{prop: comb types of boundary divisors}. The case of full toric boundary, at least, is well-understood \cite{RanganathanSkeletons1}.\medskip

\noindent \textbf{Problem 0.4.} Investigate higher codimension cycles on logarithmic mapping spaces.

This requires a more systematic understanding of higher codimension boundary strata, and a general method for manufacturing higher dimension test varieties (see \cite{TarascaBrill,TarascaDoubleTotal}). First steps towards the higher codimension topology of logarithmic mapping spaces are taken in \cite{KannanP1}.\medskip

\noindent \textbf{Problem 0.5.} Investigate cycles with $\Z$ coefficients.

It is still possible to intersect our test curves with boundary divisors in this context. However, care is required around intersection multiplicities and stabiliser groups.

\subsection{Outline} In Section~\ref{sec: generators} we establish a faithful tropicalisation result (Theorem~\ref{thm: faithful tropicalisation}) which allows us to combinatorially enumerate boundary divisors (Corollary~\ref{lem: boundary indexed by types}). We then use excision to show that the class group is generated by these boundary divisors (Proposition~\ref{prop: generated by boundary strata}).

In Section~\ref{sec: test curves} we outline a general method for manufacturing test curves in the moduli space. Our test curves are morphisms $\PP^1 \to \Mlog$ which intersect the boundary at finitely many points away from strata of codimension $\geqslant 2$. We establish strong control over which boundary strata a test curve can intersect.

In Section~\ref{sec: relations} we prove the main results. We furnish an explicit basis for the rational class group, consisting of boundary divisors (Theorem~\ref{thm: main}). The test curve construction is crucial for proving that various collections of boundary divisors are linearly independent. As a corollary, we conclude that all relations amongst boundary divisors are pulled back from the WDVV relations on $\Mbarn$. Finally in Section~\ref{sec: Picard} we leverage our control over the class group to give a combinatorial description of the Picard group (Theorem~\ref{thm: Picard}).

In Appendix~\ref{sec: divisors on Mbarn} we record for posterity a complete linearly independent set of relations between boundary divisors in $\Mbarn$.

\subsection{Acknowledgements} We thank Dhruv~Ranganathan for initially suggesting this problem, and for several helpful conversations. We thank Mark~Gross for useful discussions, and Yu~Wang for collaborations in the early stages of the project. We thank the referee for several useful suggestions.

P.K.-H. is supported by an EPSRC Studentship, reference 2434344. N.N. is supported by the Herchel Smith Fund. Q.S. is supported by EPSRC Centre for Doctoral Training in Geometry and Number Theory at the Interface, grant number EP/L015234/1. W.Z. is supported by Cambridge International Trust and DPMMS at the University of Cambridge.

\section{Generators} \label{sec: generators}

\noindent For background on stable logarithmic maps, stable tropical maps, and combinatorial types thereof, we refer to \cite[Section~2]{ACGSDecomposition}.

\subsection{Tropical moduli} \label{sec: tropical moduli} A \textbf{combinatorial type of stable tropical map} to $\R_{\geqslant 0}$ with tangency profile $\upalpha$ consists of the following data:
\begin{itemize}
\item \textbf{Source graph.} A finite tree $\Gamma$ consisting of vertices $V(\Gamma)$, finite edges $E(\Gamma)$, and legs $L(\Gamma)$ equipped with a bijection
\[ L(\Gamma) \cong \{1,\ldots,n\}.\]
Each vertex $v \in V(\Gamma)$ is equipped with a degree label $d_v \in \N$ such that $\Sigma_{v \in V(\Gamma)} d_v=d$.\smallskip
\item \textbf{Image cones.} Faces of $\R_{\geqslant 0}$ associated to every vertex and edge of $\Gamma$
\[ v \rightsquigarrow \upsigma_v, \quad e \rightsquigarrow \upsigma_e, \]
such that if $v \leqslant e$ then $\upsigma_v \leqslant \upsigma_e$.\smallskip
\item \textbf{Slopes.} For each oriented edge $\vec{e} \in \vec{E}(\Gamma)$ a slope $m_{\vec{e}} \in \Z$ satisfying $m_{\vec{e}}=-m_{\cev{e}}$. At each vertex $v \in V(\Gamma)$ these slopes must satisfy the balancing condition
\[ d_v = \sum_{v \leqslant e} m_{\vec{e}} + \sum_i \upalpha_i \]
where each edge $e$ is oriented away from $v$, and the second sum is over legs supported at $v$.
\end{itemize}

Each vertex is required to be stable, meaning that either $d_v > 0$ or $v$ is at least trivalent. To each combinatorial type we associate a strictly convex rational polyhedral cone constituting the associated tropical moduli. This is embedded in an orthant coordinatised by the edge lengths $\ell_e$ and the vertex positions $\mathsf{f}(v)$ and cut out by continuity equations
\[ \mathsf{f}(v_1) + m_{\vec{e}}\, \ell_e = \mathsf{f}(v_2) \]
indexed by oriented edges $\vec{e}$ from $v_1$ to $v_2$. The \textbf{dimension} of a combinatorial type is the dimension of the corresponding cone. Specialisations of combinatorial types induce face inclusions, and the colimit of the resulting diagram is a cone complex which we denote
 \[ \Sigma_{0,\upalpha} \]
 and refer to as the \textbf{moduli space of stable tropical maps.}

\subsection{Boundary and tropicalisation} Since the pair $(\PP^r|H)$ is convex, the space $\Mlog$ is logarithmically smooth over the trivial logarithmic point, i.e. is a toroidal embedding \cite{KKMSD, AbramovichKaru}. It has dense interior
\[ M_{0,\upalpha}(\PP^r|H) \subseteq \Mlog\]
 parametrising maps from a smooth source curve that do not factor through the hyperplane $H$. The \textbf{boundary} is the complement
\[ \partial \Mlog \colonequals \Mlog \setminus M_{0,\upalpha}(\PP^r|H). \]
It is a union of irreducible hypersurfaces. The tropicalisation 
\[ \Trop \Mlog\]
is a cone complex whose cones are in bijective, inclusion-reversing correspondence with boundary strata. Tropicalisation of families of stable logarithmic maps \cite[Section~2.5]{ACGSDecomposition} produces a natural morphism of cone complexes
\[ \varphi \colon \Trop \Mlog \to \Sigma_{0,\upalpha}.\]
The following is a case of faithful tropicalisation of moduli spaces \cite{CaporasoGonality,ACP,UlirschWeightedStable,AscherMolcho,CMRAdmissible,CHMRWeighted,GrossCorrespondence,RanganathanSkeletons1,BBCMMW,CavalieriChanUlirschWise,MR20, LenUlirsch,OdakaOshima,MolchoWise,TropUnivJac,BCK,KHQuot,NabijouConfiguration}.

\begin{theorem}[Theorem~\ref{thm: faithful tropicalisation introduction}] \label{thm: faithful tropicalisation} $\varphi$ is an isomorphism of cone complexes.	
\end{theorem}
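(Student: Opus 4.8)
The plan is to exhibit $\varphi$ as an isomorphism of cone complexes by verifying the two conditions that constitute such an isomorphism: that $\varphi$ induces a bijection between the cones of $\Trop \Mlog$ and the cones of $\Sigma_{0,\upalpha}$ compatible with the face relations, and that on each cone $\varphi$ restricts to an integral-linear isomorphism. Recall from the general theory of toroidal embeddings that the cones of $\Trop \Mlog$ are indexed by the boundary strata, and that the cone attached to a stratum is canonically the dual $\Hom(\overline{\mathcal{M}}_{\eta}, \R_{\geqslant 0})$ of the characteristic monoid $\overline{\mathcal{M}}_{\eta}$ at the generic point $\eta$ of that stratum. The generic point carries a combinatorial type $\tau$ of stable logarithmic map, and I would organise the proof around the assignment $\tau \mapsto \tau^{\trop}$ sending such a type to the combinatorial type of its tropicalisation.

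For the cone-wise statement I would invoke the minimal (basic) logarithmic structure of \cite{ChenLog,GrossSiebertLog,ACGSDecomposition}. By construction, the characteristic monoid $\overline{\mathcal{M}}_{\eta}$ at the generic point of the stratum of type $\tau$ is the basic monoid $Q_\tau$, and its dual $\Hom(Q_\tau, \R_{\geqslant 0})$ is precisely the cone of tropical maps of type $\tau^{\trop}$: points of the dual are assignments of edge lengths $\ell_e$ and vertex positions $\mathsf{f}(v)$ satisfying the continuity equations, exactly as in Section~\ref{sec: tropical moduli}. The tropicalisation morphism $\varphi$ is, under these identifications, the map induced by $\tau \mapsto \tau^{\trop}$, and hence restricts to the tautological integral-linear isomorphism on each cone. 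This reduces the theorem to a combinatorial comparison of the two indexing sets.

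It then remains to show that $\tau \mapsto \tau^{\trop}$ is a bijection between combinatorial types of basic logarithmic maps occurring in $\Mlog$ and combinatorial types of stable tropical maps to $\R_{\geqslant 0}$, compatible with specialisation so that face inclusions match. Injectivity is immediate, since $\tau^{\trop}$ records the dual tree $\Gamma$ together with the degree labels $d_v$, the image faces $\upsigma_v, \upsigma_e$, and the edge slopes $m_{\vec e}$, and this is exactly the data determining $\tau$; the balancing condition on the tropical side is the contact-order compatibility on the logarithmic side. Face compatibility follows because degenerating a logarithmic map within $\Mlog$, equivalently contracting an edge length to zero on the tropical side, induces the corresponding inclusion of combinatorial types in both pictures.

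I expect the main obstacle to be surjectivity, i.e. realisability: every combinatorial type of stable tropical map to $\R_{\geqslant 0}$ with tangency profile $\upalpha$ must arise from an actual stable logarithmic map, so that it indexes a non-empty stratum and a genuine cone of $\Trop \Mlog$. Here I would use that the pair $(\PP^r|H)$ is convex and that we work in genus zero, so the usual obstructions to realising a prescribed tropical map vanish: given $\tau$ one builds a source curve with dual tree $\Gamma$, maps each component $C_v$ to $\PP^r$ with degree $d_v$ and tangency to $H$ dictated by the slopes meeting $v$, and checks that the balancing condition guarantees a consistent basic logarithmic structure whose tropicalisation is $\tau$. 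A secondary point requiring care is that $\Mlog$ is a Deligne--Mumford stack, so one must confirm that automorphisms of logarithmic maps do not affect the attached cones; this is standard, since the generalised cone complex $\Trop \Mlog$ depends only on the characteristic monoids.
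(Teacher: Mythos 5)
There is a genuine gap, and it sits exactly where you write ``injectivity is immediate.'' The cones of $\Trop \Mlog$ are indexed by boundary \emph{strata}, not by combinatorial types, and your reduction to the bijectivity of $\tau \mapsto \tau^{\trop}$ silently assumes that each combinatorial type indexes exactly one stratum. What your injectivity argument actually shows is that the combinatorial type of the tropicalisation determines the combinatorial type of the logarithmic map; it does not rule out the locally-closed locus $M_\uptau(\PP^r|H)$ of maps with a fixed combinatorial type having several irreducible components. If it did, each component would contribute its own cone to $\Trop\Mlog$, all mapping isomorphically onto the same cone of $\Sigma_{0,\upalpha}$, and $\varphi$ would fail to be injective. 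The paper states this explicitly (``To show that this cone is unique, we must show that $M_\uptau(\PP^r|H)$ is irreducible'') and devotes essentially the entire proof to it: one climbs the tower of forgetful morphisms
\[ \Mlog \to \Mfrak_{0,\upalpha}(\Acal|\Dcal) \to \Mfrak_{0,n}^{\log} \to \Mfrak_{0,n}, \]
showing that the stratum over $\Mfrak_\Gamma$ of logarithmic curves with prescribed tropicalisation is a torus gerbe over a torus bundle (hence irreducible), that lifting to the Artin fan is bijective on geometric points because the piecewise-linear function is determined by the combinatorial type, and finally that the fibres of $M_\uptau(\PP^r|H) \to \Mfrak_\uptau(\Acal|\Dcal)$ are open subsets of affine spaces of sections. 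None of this is in your proposal, and it cannot be waved away: irreducibility of strata in moduli of logarithmic maps is false for general targets, so some argument specific to genus zero and $(\PP^r|H)$ is required.

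The parts you do address are broadly in line with the paper: the cone-wise integral-linear isomorphism via basicness is exactly the paper's first step, and your identification of realisability (non-emptiness of each stratum) as requiring the convexity of $(\PP^r|H)$ matches the paper's appeal to a standard lifting lemma for smooth pairs. To repair the proposal, replace the claim that injectivity is immediate with a proof that $M_\uptau(\PP^r|H)$ is irreducible for every combinatorial type $\uptau$.
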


\begin{proof}
The tropical interpretation of basicness for stable logarithmic maps \cite[Section~1.4]{GrossSiebertLog} ensures that $\varphi$ maps every source cone isomorphically onto a target cone. Hence it suffices to show that every target cone is the image of a unique source cone.

A target cone $\uptau \leqslant \Sigma_{0,\upalpha}$ corresponds to a combinatorial type of stable tropical map. Let
\[ M_\uptau(\PP^r|H) \subseteq \Mlog \]
denote the locally-closed locus of logarithmic maps which tropicalise to this combinatorial type. A standard lifting argument for smooth pairs (see e.g. \cite[Lemma~3.1]{BNR2}) shows that $M_\uptau(\PP^r|H)$ is nonempty. This ensures that there exists at least one cone in $\Sigma \Mlog$ whose image is $\uptau$. To show that this cone is unique, we must show that $M_\uptau(\PP^r|H)$ is irreducible.

Consider the tower of forgetful morphisms
\begin{equation} \label{eqn: tower of forgetful morphisms} \Mlog \to \Mfrak_{0,\upalpha}(\Acal|\Dcal) \to \Mfrak_{0,n}^{\log} \to \Mfrak_{0,n}.\end{equation}
where $\Mfrak_{0,\upalpha}(\Acal|\Dcal)$ is the space of prestable logarithmic maps to the Artin fan and $\Mfrak_{0,n}^{\log}$ is the space of logarithmic curves. For background, see \cite[Section~3]{AbramovichWiseBirational}, \cite[Section~3.5]{AbramovichMarcusWiseComparison}, and \cite[Appendix~A]{GrossSiebertLog}.

The proof strategy is to pass up the tower \eqref{eqn: tower of forgetful morphisms}, constructing the locally-closed stratum $M_\uptau(\PP^r|H)$ inductively. At each step we deduce irreducibility of the given stratum by examining the fibres over the previous stratum. Starting at the bottom, the combinatorial type of stable tropical map indexing $\uptau$ contains the data of the marked dual graph $\Gamma$ of the source curve. This defines a locally-closed stratum
\[ \Mfrak_\Gamma \subseteq \Mfrak_{0,n}.\]
Note that $\Mfrak_\Gamma$ is irreducible. It fits into the following fibre square (see e.g. \cite[Section~1.4]{CN22})
\[
\begin{tikzcd}
\Mfrak_\Gamma \ar[r,hook] \ar[d] \ar[rd,phantom,"\square"] & \Mfrak_{0,n} \ar[d] \\
\Bcal T_{\upsigma_\Gamma} \ar[r,hook] & \Acal_{\Mfrak_{0,n}}
\end{tikzcd}
\]
where $\upsigma_\Gamma \leqslant \Sigma \Mfrak_{0,n}$ is the smooth cone coordinatised by the edge lengths of $\Gamma$. There is a natural morphism of cones $\uptau \to \upsigma_\Gamma$ and a locally-closed stratum 
\[ \Mfrak_\uptau^{\log} \subseteq \Mfrak_{0,n}^{\log} \]
parametrising logarithmic curves whose tropicalisation is the pullback of the universal tropical curve along $\uptau \to \upsigma_\Gamma$. This fits into the following fibre square (see e.g. \cite[Corollary~5.25]{OlssonLogStacks})
\[
\begin{tikzcd}
\Mfrak_\uptau^{\log} \ar[r] \ar[d] \ar[rd,phantom,"\square"] & \Mfrak_\Gamma \ar[d] \\
\Bcal T_\uptau \ar[r] & \Bcal T_{\upsigma_\Gamma}.	
\end{tikzcd}
\]
The lattice morphism $N_\uptau \to N_{\upsigma_\Gamma}$ factors through its image
\[ N_\uptau \twoheadrightarrow N_{\widetilde\uptau} \hookrightarrow N_{\upsigma_\Gamma} \]
and the same holds for the map of classifying stacks
\[ \Bcal T_\uptau \to \Bcal T_{\widetilde\uptau} \to \Bcal T_{\upsigma_\Gamma}.\]
A direct argument (see e.g. \cite[Lemma~4.5]{CN22}) shows that the first morphism is a gerbe for
\[ \ker(N_{\uptau} \to N_{\widetilde\uptau})\otimes \Gm\]
while the second morphism is a principal bundle for
\[ \operatorname{coker}(N_{\widetilde\uptau} \to N_{\upsigma_\Gamma})\otimes \Gm.\]
Both these groups are algebraic tori; in the latter case, because the inclusion $N_{\widetilde\uptau} \hookrightarrow N_{\upsigma_\Gamma}$ is saturated. 
In particular, they are irreducible. It follows that the generic fibre of
\[ \Mfrak_\uptau^{\log} \to \Mfrak_\Gamma\] 
is irreducible, and hence $\Mfrak_\uptau^{\log}$ is irreducible. Next, consider the locally-closed stratum
\[ \Mfrak_\uptau(\Acal|\Dcal) \subseteq \Mfrak_{0,\upalpha}(\Acal|\Dcal) \]
parametrising prestable logarithmic maps to the universal target whose combinatorial type is that indexing $\uptau$. We claim that the forgetful morphism
\begin{equation} \label{eqn: map on strata, Artin fan to log curves} \Mfrak_\uptau(\Acal|\Dcal) \to \Mfrak_\uptau^{\log} \end{equation}
is bijective on geometric points. Indeed given a logarithmic curve $\Ccal$, a morphism $\Ccal \to (\Acal|\Dcal)$ is equivalent to a piecewise-linear function on $\Sigma \Ccal$ (see e.g. \cite[Proposition~2.10]{ACGSDecomposition}). However this piecewise-linear function is already determined by the choice of combinatorial type, and hence there is a unique lift. We conclude that $\Mfrak_\uptau(\Acal|\Dcal)$ is irreducible.

Finally we describe the fibres of 
\begin{equation} \label{eqn: map on strata, geometric target to universal target} M_\uptau(\PP^r|H) \to \Mfrak_\uptau(\Acal|\Dcal).\end{equation}
Choose coordinates on $\PP^r$ so that $H=H_0$ is the first coordinate hyperplane. A geometric point of $\Mfrak_\uptau(\Acal|\Dcal)$ consists of a marked logarithmic curve $\Ccal$ together with a piecewise-linear function on $\Sigma \Ccal$ with associated line bundle-section pair $(L,f_0)$. A lift to $M_\uptau(\PP^r|H)$ consists of the data of additional sections
\[ f_1,\ldots,f_r \in H^0(C,L)\]
such that the tuple $[f_0,f_1,\ldots,f_r]$ has no basepoints. The space of such sections is a Zariski open subset of affine space, hence is irreducible. We conclude that $M_\uptau(\PP^r|H)$ is irreducible.
\end{proof}

\begin{remark}
The specific geometry of $(\PP^r|H)$ is invoked only in the final step of the preceding proof, where we utilise our understanding of the fibres of \eqref{eqn: map on strata, geometric target to universal target}. For a general smooth pair such fibres are difficult to describe: they may be empty, or may have multiple components. For examples, see \cite[Section~1.4]{CvGKTDegenerateContributions}.
\end{remark}

\begin{corollary}\label{lem: boundary indexed by types} The irreducible boundary divisors in $\Mlog$ are indexed by combinatorial types of stable tropical maps to $\RR_{\scriptsize{\geqslant 0}}$ with tangency profile $\upalpha$, whose associated tropical moduli cone is one-dimensional.
\end{corollary}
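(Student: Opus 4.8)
The plan is to derive this corollary as a direct consequence of the faithful tropicalisation theorem just established, combined with the standard dictionary between boundary strata and cones in the tropicalisation. The essential point is that Theorem~\ref{thm: faithful tropicalisation} identifies $\Trop \Mlog$ with $\Sigma_{0,\upalpha}$ as cone complexes, and the cones of $\Trop \Mlog$ are in inclusion-reversing bijection with the boundary strata of $\Mlog$.

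First I would recall the general principle for toroidal embeddings: since $\Mlog$ is logarithmically smooth over the trivial logarithmic point, its tropicalisation is a cone complex whose cones correspond, inclusion-reversingly, to the boundary strata. Under this correspondence the dimension of a cone equals the codimension of the associated stratum. Irreducible boundary divisors are precisely the codimension-one strata, so they correspond to the one-dimensional cones (rays) of $\Trop \Mlog$. This is exactly the content already spelled out in Section~1.2 of the excerpt, where $\Trop \Mlog$ is described as having cones ``in bijective, inclusion-reversing correspondence with boundary strata.''

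Next I would transport this statement across the isomorphism $\varphi \colon \Trop \Mlog \xrightarrow{\sim} \Sigma_{0,\upalpha}$ supplied by Theorem~\ref{thm: faithful tropicalisation}. Since $\varphi$ is an isomorphism of cone complexes, it induces a dimension-preserving bijection between the cones of $\Trop \Mlog$ and those of $\Sigma_{0,\upalpha}$. In particular the rays of $\Trop \Mlog$ correspond bijectively to the rays of $\Sigma_{0,\upalpha}$. By the construction of $\Sigma_{0,\upalpha}$ in Section~\ref{sec: tropical moduli}, each cone is the tropical moduli cone associated to a combinatorial type of stable tropical map to $\R_{\geqslant 0}$ with tangency profile $\upalpha$, and its dimension is the dimension of that combinatorial type. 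Composing the two bijections yields the desired correspondence between irreducible boundary divisors of $\Mlog$ and combinatorial types whose associated tropical moduli cone is one-dimensional.

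I do not anticipate a serious obstacle here; the statement is essentially a bookkeeping consequence of the isomorphism already proven. The only point requiring minor care is to confirm that the isomorphism $\varphi$, being an isomorphism of cone complexes, genuinely respects dimension, so that irreducible divisors (codimension-one strata, i.e. rays) on the left match rays on the right, rather than cones of some other dimension. This follows immediately from $\varphi$ mapping each source cone isomorphically onto a target cone, as established in the proof of Theorem~\ref{thm: faithful tropicalisation}. One should also note that a one-dimensional tropical moduli cone need not force the combinatorial type itself to have a single edge, since vertex positions also contribute coordinates; but this subtlety affects only the later enumeration of $N(\upalpha)$ and not the bijection asserted in the corollary.
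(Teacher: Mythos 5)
Your proposal is correct and follows exactly the paper's argument: the paper's proof is the one-line observation that the isomorphism $\varphi$ of Theorem~\ref{thm: faithful tropicalisation} identifies the sets of rays, and you have simply spelled out the same dictionary (rays of the tropicalisation correspond to irreducible boundary divisors on one side and to one-dimensional combinatorial types on the other) in more detail. No gaps.
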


\begin{proof} Since $\varphi$ is an isomorphism, in particular it identifies the sets of rays.\end{proof}

We let $N(\upalpha)$ denote the number of irreducible boundary divisors in $\Mlog$. By the preceding result, this equals the number of one-dimensional combinatorial types of stable tropical map. This set can be described quite explicitly.

\begin{proposition} \label{prop: comb types of boundary divisors} The combinatorial types of stable tropical maps with one-dimensional tropical moduli fall into the following families
\[
\begin{tikzpicture}
\draw[fill=black] (0,0) circle[radius=2pt];
\draw[blue] (0,0) node[above]{\scriptsize$d_0$};
\draw (0,0) node[below]{\scriptsize$C_0$};

\draw[fill=black] (-2,0.75) circle[radius=2pt];
\draw[blue] (-2,0.75) node[above]{\scriptsize$d_1$};
\draw (-2,0.75) node[left]{\scriptsize$C_1$};

\draw (0,0) -- (-2,0.75);
\draw[blue] (-1,0.375) node[above]{\scriptsize$m_1$};

\draw (-2,0.15) node{$\vdots$};

\draw[fill=black] (-2,-0.75) circle[radius=2pt];
\draw[blue](-2,-0.75) node[below]{\scriptsize$d_k$};
\draw (-2,-0.75) node[left]{\scriptsize$C_k$};

\draw (-2,-0.75) -- (0,0);
\draw[blue] (-1,-0.375) node[below]{\scriptsize$m_k$};

\draw[fill=blue,blue] (-2,-1.75) circle[radius=2pt];
\draw[blue,->] (-2,-1.75) -- (0,-1.75);

\draw[blue,->] (-1,-1) -- (-1,-1.5);
\draw[blue] (-1,-1.25) node[right]{\scriptsize$\mathsf{f}$};

\draw (-1,-2.5) node{\small{Rocket}};
\end{tikzpicture}\qquad\qquad
\begin{tikzpicture}
\draw[fill=black] (-1,0) circle[radius=2pt];
\draw[blue] (-1,0) node[above]{\scriptsize$d$};
\draw (-1,0) node[left]{\scriptsize$C$};


\draw[fill=blue,blue] (-2,-1.75) circle[radius=2pt];
\draw[blue,->] (-2,-1.75) -- (0,-1.75);

\draw[blue,->] (-1,-1) -- (-1,-1.5);
\draw[blue] (-1,-1.25) node[right]{\scriptsize$\mathsf{f}$};

\draw (-1,-2.5) node{\small{Airborne}};
\end{tikzpicture}\qquad\qquad
\begin{tikzpicture}
	
\draw[fill=black] (1,-0.5) circle[radius=2pt];
\draw[blue] (1,-0.5) node[left]{\scriptsize$d_2$};
\draw (1,-0.5) node[right]{\scriptsize$C_2$};

\draw (1,-0.5) -- (1,0.5);
\draw[fill=black] (1,0.5) circle[radius=2pt];
\draw[blue] (1,0.5) node[left]{\scriptsize$d_1$};
\draw (1,0.5) node[right]{\scriptsize$C_1$};


\draw[fill=blue,blue] (1,-1.75) circle[radius=2pt];
\draw[blue,->] (1,-1.75) -- (2,-1.75);

\draw[blue,->] (1.5,-1) -- (1.5,-1.5);
\draw[blue] (1.5,-1.25) node[right]{\scriptsize$\mathsf{f}$};

\draw (1.5,-2.5) node{\small{Binary}};
\end{tikzpicture}
\]
The distribution of the markings $x_i$ is omitted, but can be arbitrary as long as the resulting map is stable and balanced. For rockets we assume $k \geqslant 1$ and $d_j \geqslant m_j > 0$. The balancing conditions at the vertices are
\begin{alignat*}{2}
C_j \colon & \quad \Sigma_{x_i \in C_j} \upalpha_i = d_j - m_j, \\
C_0 \colon & \quad \Sigma_{x_i \in C_0} \upalpha_i = d_0+\Sigma_{j=1}^k m_j.
\end{alignat*}
For the airborne, the source curve is smooth and mapped entirely inside the divisor. For binaries, we assume
\[ \Sigma_{x_i \in C_j} \upalpha_i = d_j \]
for $j \in \{1,2\}$, in order to guarantee that the connecting edge has weight zero.
\end{proposition}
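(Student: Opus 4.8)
The plan is to enumerate the one-dimensional cones of $\Sigma_{0,\upalpha}$ directly, using the defining data of a combinatorial type from Section~\ref{sec: tropical moduli}. A cone is one-dimensional exactly when, after imposing the continuity equations, the space of admissible edge lengths and vertex positions has exactly one degree of freedom. First I would record the two sources of continuous parameters: the edge lengths $\ell_e$ and the absolute position $\mathsf{f}(v)$ of the image in $\R_{\geqslant 0}$. A vertex mapped to the origin $0 \in \R_{\geqslant 0}$ is pinned (its position cannot vary), whereas a vertex mapped into the open ray $\R_{>0}$ contributes a positional degree of freedom; the continuity equations $\mathsf{f}(v_1) + m_{\vec e}\,\ell_e = \mathsf{f}(v_2)$ then couple these parameters along each edge. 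The combinatorial task is to classify which source trees, degree labellings, and slope assignments yield a total of exactly one free parameter.

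The key case distinction would be on whether \emph{any} vertex is mapped to the origin. Suppose at least one vertex sits at $0$. Since $\Gamma$ is a tree, choosing such a vertex $v_0$ as a root and flowing outward, each edge length $\ell_e$ is a parameter and each continuity equation propagates the position forward; a vertex $v$ with $\mathsf{f}(v) = 0$ imposes one relation on the lengths along the path from $v_0$, while a vertex in $\R_{>0}$ imposes none. Demanding a one-dimensional solution space forces the tree to be extremely constrained: essentially a central vertex $C_0$ at the origin with pendant edges to vertices $C_j$, where the positive slopes $m_j > 0$ push each $C_j$ out to $\mathsf{f}(C_j) = m_j \ell_{e_j} > 0$, and the single free parameter is the common scaling of all lengths. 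This is precisely the \textbf{rocket}. The balancing condition at each leaf $C_j$ reads $d_j = m_j + \Sigma_{x_i \in C_j}\upalpha_i$ (the edge oriented away from $C_j$ carries slope $-m_j$), giving $\Sigma_{x_i\in C_j}\upalpha_i = d_j - m_j$, and balancing at $C_0$ sums the incoming slopes $m_j$ with the legs at $C_0$ to yield $\Sigma_{x_i \in C_0}\upalpha_i = d_0 + \Sigma_j m_j$, matching the stated conditions. Stability, together with $\alpha_i \geqslant 0$, forces $d_j \geqslant m_j > 0$ at each leaf. If instead no vertex maps to the origin, there are no length relations and the single parameter must be the global position $\mathsf{f}$ common to all vertices; this requires all edge slopes to vanish (otherwise positions would vary independently, raising the dimension) and all edges to have fixed length, i.e. length zero, so the source is smooth. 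Depending on whether the curve carries degree or is entirely interior, this produces the \textbf{binary} (two components meeting along a weight-zero edge, each balanced so $\Sigma_{x_i \in C_j}\upalpha_i = d_j$) or the \textbf{airborne} (a single smooth component mapped into the divisor).

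The main obstacle I anticipate is the bookkeeping in the mixed subcases: ruling out spurious configurations that superficially appear one-dimensional. For instance, one must verify that a rocket cannot have two vertices at the origin joined by a positive-slope chain without either raising the dimension or violating the tree/stability hypotheses, and that a chain of length-varying edges does not sneak in extra parameters. The cleanest way to handle this is to argue via the rank of the linear system cut out by the continuity equations on the orthant, exactly as in the (commented-out) presentation of $N_\uptau$ in the proof of Theorem~\ref{thm: faithful tropicalisation}: the saturation procedure there shows $\dim \uptau$ equals the number of free length parameters plus a possible global positional parameter, and setting this equal to one pins down the allowed shapes. I would close by observing that in every surviving configuration the marking distribution is unconstrained beyond stability and balancing, which accounts for the phrase that the markings ``can be arbitrary as long as the resulting map is stable and balanced,'' and that the three families are visibly mutually exclusive and exhaustive.
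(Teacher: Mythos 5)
Your overall strategy --- counting free parameters among edge lengths and vertex positions modulo the continuity equations --- is sound and is essentially the paper's, but the case analysis is executed with the geometry of two of the three types inverted, and the configurations you describe are not in fact one-dimensional. The paper's dividing line is the number of vertices mapped into $\RR_{>0}$: for a tree there are $|V|$ ambient parameters (the $|V|-1$ edge lengths plus one root position) and one pinning equation $\mathsf{f}(v)=0$ per vertex at the origin, so $\dim \uptau \geqslant \#\{v : \mathsf{f}(v)>0\}$, and hence at most one vertex is elevated. Your rocket violates this: you place $C_0$ at the origin and push each $C_j$ out to $\mathsf{f}(C_j)=m_j\ell_{e_j}>0$, but then for $k\geqslant 2$ the positions $\mathsf{f}(C_1),\ldots,\mathsf{f}(C_k)$ vary independently and the cone has dimension $k$; there is no ``common scaling of all lengths''. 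In the actual rocket it is $C_0$ that is mapped into $H$ (so $\mathsf{f}(C_0)>0$) while the $C_j$ sit at the origin, and the continuity equations read $m_j\ell_{e_j}=\mathsf{f}(C_0)$ for \emph{every} $j$ simultaneously --- that is what forces all lengths onto a single ray. This inversion is also the source of your sign slip: if the edge oriented away from $C_j$ really carried slope $-m_j$, balancing would give $\Sigma_{x_i\in C_j}\upalpha_i = d_j+m_j$, not $d_j-m_j$.

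The binary is likewise misplaced. You derive it in the branch where no vertex maps to the origin, as two elevated components joined by a weight-zero edge; but that configuration has two free parameters (the common height and the unconstrained edge length) and is two-dimensional. The branch with no vertex at the origin yields only the airborne, since any edge adds a free length on top of the positional parameter. The binary belongs to the branch with \emph{zero} elevated vertices, where every continuity equation reads $m_{\vec{e}}\,\ell_e=0$, forcing all slopes to vanish and leaving the edge lengths as the only parameters; one-dimensionality then means exactly one edge, with both components at the origin and $\Sigma_{x_i\in C_j}\upalpha_i=d_j$ by balancing. Correcting the case split to ``zero or one vertex in $\RR_{>0}$'' and redoing the short enumeration recovers the paper's argument.
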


\begin{proof} If there are two or more vertices mapped into $\RR_{>0}$ then the positions $\mathsf{f}(v)$ provide free parameters, so the tropical moduli has dimension $\geqslant 2$. It follows that there is at most one vertex mapped into $\RR_{>0}$. Given this, it is straightforward to deduce that the only possibilities are the types illustrated above.
\end{proof}

The overwhelming majority of combinatorial types are rockets; in other contexts these have been referred to as combs \cite[Lemma~1.12 and Definition~2.2]{GathmannRelative}.

\subsection{Excision} We establish the main generation result.

\begin{proposition}\label{prop: generated by boundary strata} $\ClQ (\Mlog)$ is generated by classes of boundary divisors.
\end{proposition}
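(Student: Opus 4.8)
The plan is to apply excision and reduce the statement to the vanishing of the rational class group of the dense interior; the latter will then follow by presenting the interior as an open subvariety of a vector bundle over $M_{0,n}$. First I would invoke excision for the rational Chow groups of the Deligne--Mumford stack $\Mlog$. Writing $U = M_{0,\upalpha}(\PP^r|H)$ for the interior and $\partial = \Mlog \setminus U$ for the boundary, excision gives a right-exact sequence in codimension one,
\[ A_{\dim \Mlog - 1}(\partial)_{\Q} \to \ClQ(\Mlog) \to \ClQ(U) \to 0. \]
Since $\partial$ is pure of codimension one --- it is a union of irreducible hypersurfaces --- the group $A_{\dim \Mlog - 1}(\partial)_{\Q}$ is its top Chow group, freely spanned by the fundamental classes of its irreducible components, namely the boundary divisors. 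Hence the image of the first arrow is exactly their span, and it suffices to prove $\ClQ(U) = 0$.

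Next I would describe $U$ explicitly. For $n \geqslant 3$ the space $M_{0,n}$ is a fine moduli scheme and a map from a smooth marked curve carries no automorphisms, so $U$ is an honest scheme fibred over $M_{0,n}$. Let $\pi \colon \Ccal \to M_{0,n}$ be the universal curve with its sections $\sigma_1,\ldots,\sigma_n$, and set $\mathcal{L} = \OO_{\Ccal}(\Sigma_i \upalpha_i \sigma_i)$, a relative line bundle of degree $d$. Because the fibres have genus zero, $R^1\pi_* \mathcal{L} = 0$ and $\pi_* \mathcal{L}$ is locally free of rank $d+1$. The crucial point is that the tautological section of $\mathcal{L}$ vanishing along $\Sigma_i \upalpha_i \sigma_i$ furnishes a canonical coordinate $f_0$ cutting out $H$, which rigidifies the overall scaling of the map $[f_0 : \cdots : f_r]$. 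Every map in the interior is then represented uniquely by this fixed $f_0$ together with sections $f_1,\ldots,f_r \in \pi_* \mathcal{L}$, subject only to base-point freeness. This identifies $U$ with the complement of the (positive-codimension) base-point locus inside the total space of $(\pi_* \mathcal{L})^{\oplus r}$, an open subvariety of a vector bundle over $M_{0,n}$.

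Finally I would compute. One has $\ClQ(M_{0,n}) = 0$, since $M_{0,n}$ is an open subvariety of affine space $\A^{n-3}$ (equivalently, it is the complement of the boundary of $\Mbarn$, whose class group is generated by boundary divisors, so excision kills all classes). Homotopy invariance for vector bundles then gives $\ClQ$ of the total space of $(\pi_* \mathcal{L})^{\oplus r}$ equal to $\ClQ(M_{0,n}) = 0$, and a second application of excision shows that deleting the base-point locus only passes to a quotient, whence $\ClQ(U) = 0$. Combined with the first step this proves the proposition.

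The main obstacle is the precise presentation of the interior in the second step: one must pin down $f_0$ as a genuine global section rather than merely up to scaling, so that $U$ becomes an open subset of a \emph{vector} bundle (linear in the $f_i$) and not of a \emph{projective} bundle --- it is exactly this linearity that forces the class group to vanish instead of contributing a hyperplane class. Verifying that the resulting presentation is an isomorphism of stacks (using the absence of automorphisms for $n \geqslant 3$) and that the base-point locus has positive codimension are the remaining points requiring care. This mirrors the strategy of \cite[Theorem~2]{PandPic}.
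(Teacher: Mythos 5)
Your proposal is correct and takes essentially the same approach as the paper: excision reduces the claim to $\ClQ(U)=0$ for the interior $U$, and that vanishing is proved by using the canonical section $f_0$ cutting out $\Sigma_i \upalpha_i x_i$ to rigidify the overall scaling, so that the remaining data $(f_1,\ldots,f_r)$ is linear rather than projective. The only cosmetic difference is that the paper trivialises the source-curve moduli by fixing three markings at $0,1,\infty$ and exhibits $U$ directly as an open subset of an affine space, whereas you keep $M_{0,n}$ as a base and invoke homotopy invariance for the vector bundle $(\pi_*\mathcal{L})^{\oplus r}$; both are valid.
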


\begin{proof} Consider the dense interior
\[ U = M_{0,\upalpha}(\PP^r|H) = \Mlog \setminus \partial \Mlog \]
parametrising maps from a smooth source curve that do not factor through $H$. We will show that $U$ embeds as an open subset of affine space, and hence has trivial class group.

Consider a map $\PP^1 \to \PP^r$ intersecting $H$ at the markings with tangency profile $\upalpha$. This has a unique logarithmic lift, obtained by equipping the source $\PP^1$ with the divisorial logarithmic structure corresponding to the markings, and the base $\Speck$ with the trivial logarithmic structure. Consequently we may identify $U$ with a locus in the space of ordinary stable maps.

Since $n \geqslant 3$ we may fix the first three markings to be $0,1,\infty$. The moduli for the remaining markings is
\[ (\PP^1 \setminus \{0,1,\infty\})^{n-3} \setminus \Delta\]
where $\Delta$ is the large diagonal. Finally the map $\PP^1 \to \PP^r$ is given by specifying sections $f_0,\ldots,f_r$ of $\OO_{\PP^1}(d)$ up to overall scaling. We do not need to quotient by automorphisms of the source curve; these have been rigidified by fixing the first three markings. The section $f_0$ agrees up to scaling with
\[ f_0 = \prod_{i=1}^n s_i^{\upalpha_i} \]
where $s_i \in H^0(\PP^1,\OO_{\PP^1}(1))$ is a section cutting out the marking $x_i$. We fix one such section $f_0$. The remaining moduli is the choice of sections $f_1,\ldots,f_r$. There is no need to quotient by the overall scaling; this has been rigidified by fixing $f_0$. The moduli is the dense open subset of
\[ H^0(\PP^1,\OO_{\PP^1}(d))^{\oplus r} \]
consisting of sections $(f_1,\ldots,f_r)$ such that the linear system spanned by $f_0,f_1,\ldots,f_r$ is basepoint-free. We conclude that $U$ is an open subset of
\[ \left((\PP^1 \setminus \{0,1,\infty\})^{n-3} \setminus \Delta \right) \times H^0(\PP^1,\OO_{\PP^1}(d))^{\oplus r} \]
which is itself open in an affine space. It follows that $\ClQ(U)=0$. By the excision sequence for Chow groups \cite[1.8]{FultonBig} we obtain
\[ \bigoplus_D \Q \cdot D \to \ClQ (\Mlog) \to \ClQ(U) = 0 \]
where the direct sum is over irreducible components of the boundary $\Mlog \setminus U$. This proves that $\ClQ(\Mlog)$ is generated by classes of boundary divisors, as claimed.
\end{proof}

\begin{remark}\label{rmk: absolute stable maps not generated by boundary divisors}
The above result contrasts with \cite[Lemma~1.1.1]{PandPic} where the interior of $\olM_{0,n}(\PP^r,d)$ contributes nontrivially to the class group. Recall from Proposition~\ref{prop: comb types of boundary divisors} that the boundary of $\Mlog$ contains the airborne divisor, over which the source curve is smooth. Conceptually, the logarithmic moduli space therefore has larger boundary, and hence smaller interior, than the ordinary moduli space, explaining the discrepancy.

As we add additional hyperplanes to the logarithmic structure, the interior grows yet smaller; when we reach the full toric boundary, the interior becomes	very affine \cite{RanganathanSkeletons1}.
\end{remark}

\subsection{Enumerating tropical types: maximal contact case} \label{sec: enumerating boundary strata} This section is logically independent of the rest of the paper. It illustrates how to work with combinatorial types in practice.

Set $\upalpha=(d,0,\ldots,0)$ so that all tangency is concentrated at the marking $x_1$. We enumerate the combinatorial types with one-dimensional tropical moduli in this setting. 

\begin{proposition} For $\upalpha=(d,0,\ldots,0)$ of length $n$ we have
\[ N(\upalpha) = 2^{n-1} - n + \sum_{k=1}^d \sum_{k_1=0}^{\min(k,n-1)} \sum_{d_1=0}^{d-k} \sum_{d_2=0}^{d-k-d_1} \left( \sum_{a=0}^{k_1} (-1)^{k_1+a} \dfrac{(a+1)^{n-1}}{a!(k_1-a)!} \right)\! {d_1+k_1-1 \choose k_1-1} p_{k-k_1}(d_2+k-k_1)\]
where $p_k(m)$ is the number of unordered partitions of $m$ into $k$ positive parts, which equals the coefficient of $t^m$ in the power series expansion of
\[ \dfrac{t^k}{(1-t)(1-t^2) \cdots (1-t^k)}.\]
In the above formula the $k_1=0$ instances of the binomial coefficient are defined as follows
\[ {e \choose -1} =
\begin{cases}
1 \quad \text{if } e = -1 \\
0 \quad \text{if } e \geqslant 0
 \end{cases}\]
 and the $k_1=k$ instance of the partition function is
 \[ p_{0}(e) = \begin{cases}
 1  \quad \text{if } e=0 \\
 0 \quad \text{otherwise.}
 \end{cases}\]
\end{proposition}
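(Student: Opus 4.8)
The plan is to enumerate the one-dimensional combinatorial types for $\upalpha = (d, 0, \ldots, 0)$ by sorting them according to the trichotomy of Proposition~\ref{prop: comb types of boundary divisors}: airborne, binary, and rocket. The airborne type is unique (the source is smooth, mapped into $H$), contributing $1$. For binaries, the source splits as $C_1 \cup C_2$ with the connecting edge of weight zero, forcing $\Sigma_{x_i \in C_j} \upalpha_i = d_j$; since all tangency sits on $x_1$, the component carrying $x_1$ has degree $d$ and the other has degree $0$, and a degree-$0$ component must be at least trivalent, hence carry at least two of the remaining $n-1$ markings. Counting the ways to distribute the markings $\{2, \ldots, n\}$ so that $C_2$ (the degree-$0$ side) is stable should yield a clean binomial expression; I expect this, together with the airborne term, to account for the closed-form prefactor $2^{n-1} - n + 1$ after reconciling overlaps with the rocket count.

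\textbf{Rockets.} The bulk of the work is the rocket count, which produces the large nested sum. A rocket has a central vertex $C_0$ of degree $d_0$ mapped into $\R_{>0}$, together with $k \geqslant 1$ outlying vertices $C_1, \ldots, C_k$ mapped to the origin, connected by edges of weights $m_j$ satisfying $d_j \geqslant m_j > 0$ and the balancing conditions $\Sigma_{x_i \in C_j}\upalpha_i = d_j - m_j$. First I would observe that since all tangency is concentrated at $x_1$, every outlying component $C_j$ with $x_1 \notin C_j$ satisfies $d_j = m_j$, while the single component containing $x_1$ (whether it is $C_0$ or some $C_j$) absorbs the tangency $d$. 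The free discrete data are then: the total degree $k$ carried by the legs' edge-weights versus the degrees on vertices, a partition of the remaining degree among outlying and central vertices, and the distribution of the $n-1$ light markings $\{2, \ldots, n\}$ among all vertices subject to stability (each degree-$0$ vertex needs valence $\geqslant 3$).

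\textbf{Decomposing the sum.} I would then match the summation variables to these choices: $k$ ranges over the total edge-weight budget $\sum m_j$; $k_1$ counts the outlying vertices that are \emph{unstable in isolation}, i.e. have degree zero and rely on markings for stability (equivalently the number of ``light'' legs-carrying stable vertices); $d_1, d_2$ split the leftover degree between the central vertex and the positive-degree outlying vertices. The inclusion–exclusion factor $\sum_{a=0}^{k_1}(-1)^{k_1+a}(a+1)^{n-1}/(a!(k_1-a)!)$ is a surjectivity count (Stirling-type), enforcing that each of the $k_1$ marking-stabilised vertices receives at least the markings it needs; the binomial ${d_1 + k_1 - 1 \choose k_1 - 1}$ counts weak compositions of the degree $d_1$ among these $k_1$ vertices; and $p_{k-k_1}(d_2 + k - k_1)$ counts the partition of the remaining positive-degree outlying vertices. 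The boundary conventions for ${e \choose -1}$ and $p_0(e)$ handle the degenerate cases $k_1 = 0$ and $k_1 = k$, which correspond respectively to all outlying vertices having positive degree, and all of them being degree-zero marking-stabilised vertices.

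\textbf{Main obstacle.} The hard part will be organizing the bookkeeping so that each one-dimensional type is counted exactly once, with no double-counting between the ``rocket'' strata and the degenerate rocket configurations that overlap with binaries (a rocket with $k=1$ and $d_0 = 0$ resembles a binary). I would resolve this by fixing a rigid normal form for each type — choosing which vertex sits at the unique point of $\R_{>0}$, and declaring the degree-$0$ side's stability to come purely from valence — and verifying that the trichotomy of Proposition~\ref{prop: comb types of boundary divisors} partitions the types without overlap. The inclusion–exclusion verification for the marking distribution, and the identification of the generating function $t^k/\prod_{j=1}^k(1 - t^j)$ with $p_k(m)$, are then routine; the genuine subtlety is confirming that the ranges of $k, k_1, d_1, d_2$ exhaust all stable balanced configurations precisely once, which I would check by a dimension count against Proposition~\ref{prop: comb types of boundary divisors} and a small-$n$ sanity check (e.g. $n = 3$, recovering $N(\upalpha)$ by hand).
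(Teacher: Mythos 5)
Your overall strategy is the same as the paper's: run through the airborne/binary/rocket trichotomy, get $1$ airborne type, $2^{n-1}-n$ binaries, and organise the rockets into the nested sum. But the execution contains errors that would prevent you from actually landing on the formula. First, you leave open whether $x_1$ sits on the central vertex $C_0$ or on an outlying vertex $C_j$; in fact $x_1 \in C_0$ is forced, since $x_1 \in C_j$ would give $d = \Sigma_{x_i \in C_j}\upalpha_i = d_j - m_j$, i.e.\ $d_j = d + m_j > d$, which is impossible. Second, you misidentify the summation variables. In the formula $k$ is the \emph{number} of outlying vertices (bounded by $d$ because each has $d_j \geqslant m_j > 0$), not the total edge-weight budget $\sum_j m_j$; and $k_1$ is the number of outlying vertices carrying at least one marking, \emph{not} the number of degree-zero outlying vertices --- there are no degree-zero outlying vertices on a rocket, precisely because $d_j \geqslant m_j > 0$. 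With your reading of $k_1$ the inner sum would be empty. The correct picture is: the $k_1$ marked outlying components are distinguishable by their marking sets, so their extra degree $d_1$ is distributed as a weak composition, ${d_1+k_1-1 \choose k_1-1}$; the $k-k_1$ unmarked ones are indistinguishable, so their extra degree $d_2$ is distributed as an unordered partition, $p_{k-k_1}(d_2+k-k_1)$; the leftover $d-k-d_1-d_2$ is $d_0$. Your inclusion--exclusion factor is also slightly off: it is $\bigl(T(n-1,k_1+1)+T(n-1,k_1)\bigr)/k_1!$, counting distributions of $x_2,\ldots,x_n$ onto $k_1$ unordered components each receiving at least one marking, with $C_0$ optionally receiving markings.

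Finally, your ``main obstacle'' is a red herring. A rocket with $k=1$, $d_0=0$ does not overlap with a binary: the image cones and edge weights differ (the rocket's edge has weight $m_1 = d > 0$ and $C_0$ sits in $\RR_{>0}$, whereas a binary's edge has weight $0$ and both vertices map to the origin), so the trichotomy already partitions the types. The actual correction is for stability: if you allow arbitrary marking distributions on rockets, you include exactly one invalid configuration, namely $k=1$, $d_0=0$ with no marking on $C_0$ besides $x_1$ (then $C_0$ is bivalent with degree $0$). Subtracting this single unstable type is what turns $1 + (2^{n-1}-n) + (\text{rocket sum})$ into the stated $2^{n-1}-n+(\text{rocket sum})$; your proposed ``rigid normal form'' does not address this. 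You should also actually carry out the binary count (subsets of $\{2,\ldots,n\}$ of size $\geqslant 2$ on the degree-$0$ component, giving $2^{n-1}-1-(n-1) = 2^{n-1}-n$) rather than asserting it.
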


\begin{proof} We work through the taxonomy of Proposition~\ref{prop: comb types of boundary divisors}. There is always a single airborne type. By the shape of $\upalpha$, any binary type must have degree $d$ on the component containing $x_1$ and degree $0$ on the other component. By stability, there are precisely
\[ 2^{n-1}-n \]
of these. It remains to enumerate the rocket types. We must have $x_1 \in C_0$. The remaining markings can be distributed arbitrarily, except for the special case $k=1,d_0=0$ where by stability there must be at least one other marking on $C_0$. We will allow for arbitrary marking distributions, and then subtract off at the end to account for this special case. 

We enumerate the choices required to determine a rocket type. For integers $p\leqslant q$ we write $[p,q]=\{p,\ldots,q\}$.  We first choose
\begin{equation} \label{eqn: choice k} k \in [1,d] \end{equation}
the number of external components of the source curve. Note that $k \leqslant d$ because every external component has degree $\geqslant 1$. We next choose
\begin{equation} \label{eqn: choice k1} k_1 \in [0,\min(k,n-1)] \end{equation}
the number of external components which carry at least one marking. We must distribute the markings $x_2,\ldots,x_n$ between these external components and the internal component $C_0$.

Letting $T(p,q)$ denote the number of surjective functions $[1,p] \to [1,q]$, the total number of such distributions is
\[ \left( T(n-1,k_1+1) + T(n-1,k_1) \right)/k_1! \]
where the two terms count distributions which do have, respectively do not have, markings assigned to $C_0$. The inclusion-exclusion principle gives
\[ T(p,q) = \sum_{a=1}^q (-1)^{q+a} {q \choose a} a^p \]
from which we compute
\[ \left( T(n-1,k_1+1) + T(n-1,k_1) \right)/k_1! = \sum_{a=0}^{k_1} (-1)^{k_1+a} \dfrac{(a+1)^{n-1}}{a! (k_1-a)!}. \]
We now consider the distribution of the degree. Each external component has degree $\geqslant 1$ and so the $k_1$ external components carrying markings have total degree $\geqslant k_1$. We choose
\begin{equation} \label{eqn: choice d1} d_1 \in [0,d-k] \end{equation}
for the total \emph{additional} degree on these components. This may be distributed arbitrarily; we must count ordered, non-negative partitions of $d_1$ into $k_1$ parts. A stars and bars argument shows that the number of such partitions is
\[ {d_1 + k_1 - 1 \choose k_1 -1}.\]
Finally, consider the $k-k_1$ external components which do not carry any markings. Again the total degree on these must be $\geqslant k-k_1$ and we choose
\begin{equation} \label{eqn: choice d2} d_2 \in [0,d-k-d_1] \end{equation}
for the total additional degree. Again this may be distributed arbitrarily, but since the components carry no markings we must count \emph{unordered}, non-negative partitions of $d_2$ into $k-k_1$ parts. This is equivalent to counting unordered, positive partitions of $d_2+k-k_1$ into $k-k_1$ parts, of which there are precisely
\[ p_{k-k_1}(d_2+k-k_1).\]
Summarising, we have made arbitrary choices at \eqref{eqn: choice k}, \eqref{eqn: choice k1}, \eqref{eqn: choice d1}, \eqref{eqn: choice d2} and explicitly enumerated all other choices. This gives the total number of rocket types as
\[ \sum_{k=1}^d \sum_{k_1=0}^{\min(k,n-1)} \sum_{d_1=0}^{d-k} \sum_{d_2=0}^{d-k-d_1} \left( \sum_{a=0}^{k_1} (-1)^{k_1+a} \dfrac{(a+1)^{n-1}}{a!(k_1-a)!} \right)\! {d_1+k_1-1 \choose k_1-1} p_{k-k_1}(d_2+k-k_1).\]
Combining this with the number of airborne and binary types, and subtracting $1$ for the overcounting of the unstable rocket type, we conclude the result.
\end{proof}

\section{Test curves} \label{sec: test curves}

\noindent We come to the main construction: an assembly line for the manufacture of test curves in the moduli space. The geometry of algebraic surfaces plays a fundamental role.

Fix an irreducible boundary divisor $D$ in the space of stable logarithmic maps, indexed by a combinatorial type of stable tropical map of the form

\[
\begin{tikzpicture}

\draw[fill=black] (0,0) circle[radius=2pt];
\draw[blue] (0,0) node[above]{\scriptsize$d_0$};
\draw (0,0) node[below]{\scriptsize$C_0$};

\draw[fill=black] (-2,0.75) circle[radius=2pt];
\draw[blue] (-2,0.75) node[above]{\scriptsize$d_1$};
\draw (-2,0.75) node[left]{\scriptsize$C_1$};

\draw (0,0) -- (-2,0.75);
\draw[blue] (-1,0.375) node[above]{\scriptsize$m_1$};

\draw (-2,0.15) node{$\vdots$};

\draw[fill=black] (-2,-0.75) circle[radius=2pt];
\draw[blue](-2,-0.75) node[below]{\scriptsize$d_k$};
\draw (-2,-0.75) node[left]{\scriptsize$C_k$};

\draw (-2,-0.75) -- (0,0);
\draw[blue] (-1,-0.375) node[below]{\scriptsize$m_k$};

\draw[fill=blue,blue] (-2,-1.75) circle[radius=2pt];
\draw[blue,->] (-2,-1.75) -- (0,-1.75);

\draw[blue,->] (-1,-1) -- (-1,-1.5);
\draw[blue] (-1,-1.25) node[right]{\scriptsize$\mathsf{f}$};

\end{tikzpicture}
\]
This is a divisor of rocket type (see Propositon~\ref{prop: comb types of boundary divisors}). The distribution of markings is suppressed in the above picture, but understood to be a fixed partition 
\[ A_0 \sqcup A_1 \sqcup \ldots \sqcup A_k = \{ 1, \ldots, n\} \]
such that the resulting tropical map is stable and balanced. Note that $d_j \geqslant m_j > 0$. We give a process for constructing test curves
\[ \PP^1 \to \Mlog \]
which pass through $D$ at $0 \in \PP^1$ and intersect the boundary of the moduli space transversely and away from codimension-$2$ strata. Such a test curve corresponds to a family of maps
\begin{equation} \label{eqn: general family of maps}
\begin{tikzcd}
\Ccal \ar[r,"f"] \ar[d,"\pi"] & \PP^r \\
\PP^1 & 
\end{tikzcd}
\end{equation}
with $\pi^{-1}(0)$ of the shape specified by the above combinatorial type. We have taken $D$ to be a rocket divisor, but the process below can be modified (in fact, simplified) to deal with airborne and binary divisors.

\subsection{Singular total space} 
Before giving the construction, we highlight a new phenomenon which appears in the logarithmic setting: we cannot guarantee that the total space $\Ccal$ is smooth with all marking and special divisors intersecting transversely.

Indeed, suppose we have such a family \eqref{eqn: general family of maps} with $\pi^{-1}(0)$ giving an element of $D$. In a neighbourhood of $\pi^{-1}(0)$ there is an identification of effective Cartier divisors
\[ f^\star H = \Sigma_{i=1}^n \upalpha_i x_i + w C_0 \]
for some $w > 0$. For $j \in \{1,\ldots, k\}$ we then have
\[ m_j = d_j - \Sigma_{x_i \in C_j} \upalpha_i = f^\star H \cdot C_j - \Sigma_{x_i \in C_j} \upalpha_i = \left(\Sigma_{x_i \in C_j} \upalpha_i + w C_0 C_j \right) - \Sigma_{x_i \in C_j} \upalpha_i = wC_0C_j. \]
If $\Ccal$ is smooth with all marking and special divisors intersecting transversely, then $C_0 C_j=1$ for all $j$ and we conclude $m_1=\ldots=m_k=w$. This is certainly not the case for all $D$.

In the following construction, the surface $\Ccal$ will have cyclic quotient singularities arising from non-reduced blowups. This is consistent with the structure of logarithmic curves; the monomial sheaves encode local node smoothings of the form $xy=t^c$ whose total spaces are singular toric surfaces.

\subsection{Arranging the markings}\label{sec: arranging the markings}

We begin the construction. Consider the trivial family
\[ \pi_1 \colon \PP^1 \times \PP^1 \to \PP^1\]
and note that sections of $\pi_1$ correspond to smooth divisors in the linear systems $|\OO_{\PP^1 \times \PP^1}(b,1)|$ with $b \geqslant 0$. Working explicitly with homogeneous coordinates, we can choose smooth divisors
\begin{equation} \label{eqn: defn of xi} x_i \in |\OO_{\PP^1 \times \PP^1}(b_i,1)| \end{equation}
for $i \in \{1,\ldots,n\}$, satisfying the following specifications.

\begin{specification}[Central fibre] \label{conditions central fibre} For those $j \in \{ 1,\ldots,k\}$ with $A_j \neq \emptyset$, the intersection
\begin{equation} \label{eqn: intersection of A_i divisors in central fibre} \pi^{-1}(0) \cap \bigcap_{i \in A_j} x_i \end{equation}
is nonempty, and hence consists of a single point. In contrast, for all $i \in A_0$ the point $\pi^{-1}(0) \cap x_i$ is disjoint from the other $x_i$.
\end{specification}

\begin{specification}[Genericity] \label{conditions genericity} Away from $\pi^{-1}(0)$, there is no point where $3$ or more of the $x_i$ intersect, and in each fibre of $\pi$ there is at most one pairwise intersection point. At every intersection point, including those on the central fibre, the intersections are pairwise transverse and no $x_i$ is tangent to the fibre of $\pi$.
\end{specification}

For those $j \in \{1,\ldots,k\}$ with $A_j \neq \emptyset$, let $p_j$ denote the intersection point \eqref{eqn: intersection of A_i divisors in central fibre}. If $A_j=\emptyset$, let $p_j$ be an arbitrary point of $\pi^{-1}(0)$ disjoint from the $x_i$. Denote by $q_1,\ldots,q_l$ the intersection points of the $x_i$ away from $\pi^{-1}(0)$. The surface $\PP^1 \times \PP^1$ will be blown up in the points $p_1,\ldots,p_k,q_1,\ldots,q_l$ to produce the desired curve family.

\subsection{Blowing up} \label{sec: blowing up}
Define multiplicities
\begin{align*}
w & = \operatorname{lcm}(m_1,\ldots,m_k), \\
w_j & = w/m_j \quad \text{for } j \in \{1,\ldots,k\}.	
\end{align*}
Perform the non-reduced blowup of $\PP^1 \times \PP^1$ at the points $p_1,\ldots,p_k$ with multiplicities $w_1,\ldots,w_k$ in the first factor and $1,\ldots,1$ in the second factor. Then perform the ordinary blowup of the resulting surface at the points $q_1,\ldots, q_l$. The output is a singular surface $\Ccal$ together with a projection
\[ \pi \colon \Ccal \to \PP^1 \]
which is a flat family of nodal curves. 

\subsection{Special fibres} The family $\pi$ has singular fibres precisely over the images of the special points $p_1,\ldots,p_k,q_1,\ldots,q_l$. The fibre over $0 \in \PP^1$ is the reduced union of the following $\Q$-Cartier divisors:
\begin{itemize}
\item $C_0$, the strict transform of $\pi^{-1}(0) \subseteq \PP^1 \times \PP^1$.
\item $C_1,\ldots,C_k$, the reduced exceptional divisors over $p_1,\ldots,p_k$.
\end{itemize}
These divisors satisfy $C_0 C_j = 1/w_j$ and $C_{j_1} C_{j_2}=0$. Note that both $wC_0$ and $w_j C_j$ are Cartier. Moreover we have
\begin{align*}
w C_0 C_j & = w/w_j = m_j, \\
w C_0^2 & = wC_0 (-C_1-\ldots-C_k) = -(m_1+\ldots+m_k).
\end{align*}
The fibre over the image of $q_j$ is a nodal curve with two components: $D_j$ the strict transform of the original fibre, and $E_j$ the exceptional divisor of the blowup. The latter intersects the strict transforms of the two marking sections $x_{j_1},x_{j_2}$ which intersected in $\PP^1 \times \PP^1$ at $q_j$. We will construct a map with degree $d$ on $D_j$ and degree $0$ on $E_j$. Orienting the node from $D_j$ to $E_j$, the desired tangency order at the node is given by the balancing condition as
\[ u_j \colonequals \upalpha_{j_1} + \upalpha_{j_2}. \]
Finally, let $x_1,\ldots,x_n \subseteq \Ccal$ denote the strict transforms of the marking sections $x_1,\ldots,x_n \subseteq \PP^1 \times \PP^1$. By Specification~\ref{conditions genericity}, these are pairwise disjoint and contained in the smooth locus of $\pi$.

The resulting curve family is illustrated below. Divisors coloured red are those which will be mapped into $H$. The role of the smooth fibres $B$ will be explained shortly.

\[\includegraphics{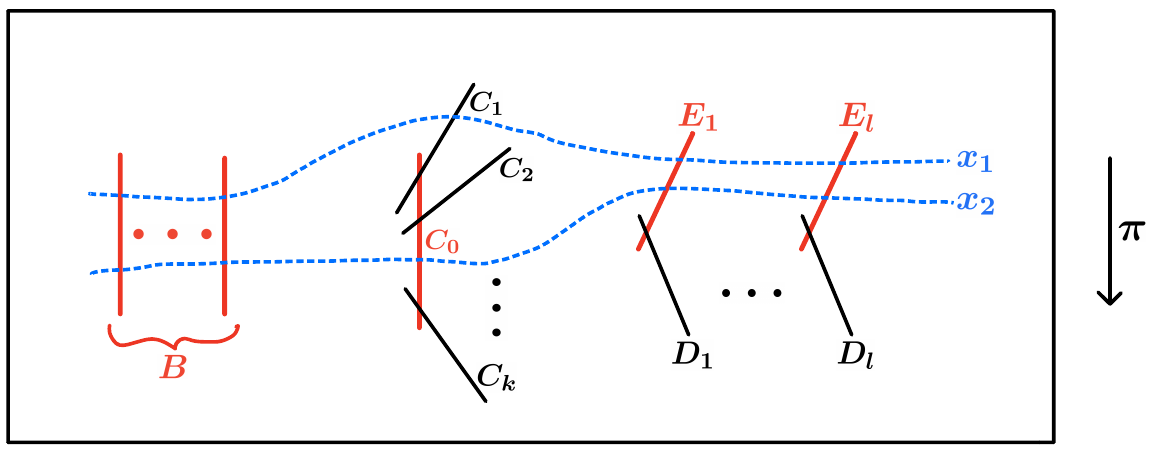}\]

\subsection{Constructing $f_0$} \label{sec: constructing f0} It remains to construct the map $\Ccal \to \PP^r$. This is achieved by constructing appropriate sections $f_0,f_1,\ldots,f_r$ of the following line bundle
\begin{equation}  \label{eqn: line bundle L} L \colonequals \OO_\Ccal(\Sigma_{i=1}^n \upalpha_i x_i+ wC_0 +\Sigma_{j=1}^l u_j E_j) \otimes \pi^\star \OO_{\PP^1}(N) \end{equation}
for $N \geqslant 0$ to be determined. We verify that $L$ has the desired multi-degree on every special fibre:
\begin{align*} \deg L|_{C_0} & = \Sigma_{x_i \in C_0} \upalpha_i + w C_0^2 = \Sigma_{x_i \in C_0}\upalpha_i - (m_1+\ldots+m_k) = d_0,\\
\deg L|_{C_j} & = \Sigma_{x_i \in C_j} \upalpha_i + w C_0 C_j = \Sigma_{x_i \in C_j}\upalpha_i + m_j = d_j,	\\
\deg L|_{D_j} & = \Sigma_{x_i \in D_j} \upalpha_i + u_j = d, \\
\deg L|_{E_j} & = \upalpha_{j_1} + \upalpha_{j_2} - u_j = 0.
\end{align*}
Assume without loss of generality that $H=H_0 \subseteq \PP^r$ so that the section $f_0$ governs the tangency with respect to $H$. Fix a generic section of $\OO_{\PP^1}(N)$ and let $B \subseteq \Ccal$ denote the vanishing locus of the pullback; this is a union of smooth fibres of $\pi$. Define
\[ f_0 \in H^0(\Ccal,L) \]
as the product of the standard section cutting out the effective Cartier divisor on the first factor of \eqref{eqn: line bundle L}, together with the section cutting out $B$ on the second factor of \eqref{eqn: line bundle L}.

We verify that $f_0$ has the desired behaviour. On a general fibre of $\pi$ the vanishing locus of $f_0$ is
\[ \Sigma_{i=1}^n \upalpha_i x_i \]
which gives the desired tangency profile. Focusing on $\pi^{-1}(0)$ we see that $f_0$ vanishes identically on $C_0$ and has the desired vanishing order at every marking in $A_1 \sqcup \ldots \sqcup A_k$. The vanishing order at the node $C_0 \cap C_j$ is given by
\[ w C_0 C_j = w/w_j = m_j \]
as required. Hence $f_0$ has the desired behaviour on the central fibre. On the other singular fibres, $f_0$ vanishes identically on $E_j$ (assuming $u_j \neq 0$) and its vanishing order at the node $D_j \cap E_j$ is
\[ u_j D_j E_j = u_j \]
as required. Finally $f_0$ vanishes identically on the smooth special fibres which constitute $B$. These fibres give points in the airborne boundary divisor (see Proposition~\ref{prop: comb types of boundary divisors} and Section~\ref{sec: defn alien divisor}). This verifies the behaviour of $f_0$.

\subsection{Constructing $f_1,\ldots,f_r$} \label{sec: constructing f1 to fm} The remaining sections of $L$ have no constraints, and may be chosen arbitrarily as long as the resulting tuple $[f_0,f_1,\ldots,f_r]$ has no basepoints. Recall that
\[ L = \OO_\Ccal(\Sigma_{i=1}^n \upalpha_i x_i+ wC_0 +\Sigma_{j=1}^l u_j E_j) \otimes \pi^\star \OO_{\PP^1}(N) \]
where $N \geqslant 0$ is fixed but arbitrary. The following lemma constructs the desired sections of $L$, and in so doing determines a lower bound for $N$.
\begin{lemma} \label{lem: BP free curves} Define the integer
\[ M \colonequals d_0 + \Sigma_{j=1}^k d_j w_j - \Sigma_{i=1}^n \upalpha_i b_i - w.\]
If $N \geqslant \max(M,0)$ then there exist curves
\[ F_1,\ldots, F_r \in |L| \]
such that the intersection $V(f_0) \cap F_1 \cap \ldots \cap F_r \subseteq \Ccal$ is empty.
\end{lemma}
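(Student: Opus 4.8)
The plan is to derive the lemma from a single positivity statement — global generation of $L$ along the zero locus $Z \colonequals V(f_0)$ — and then to extract the numerical threshold $M$ from an intersection computation on the fibred surface $\pi \colon \Ccal \to \PP^1$.

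First I would make the reduction. The condition $V(f_0) \cap F_1 \cap \cdots \cap F_r = \emptyset$ asserts exactly that $f_0, f_1, \ldots, f_r$ have no common zero, and since $f_0$ vanishes precisely on $Z$ this is the same as asking for sections $f_1, \ldots, f_r \in H^0(\Ccal, L)$ with no common zero on $Z$. Every irreducible component of $Z$ — the body $C_0$, the contracted exceptional curves $E_j$ with $u_j > 0$, the marking sections $x_i$ with $\upalpha_i > 0$, and the smooth fibres making up $B$ — is a rational curve. If $L$ is globally generated along $Z$, then the restricted linear system is base-point-free on each such component, and two general members of a base-point-free system on $\PP^1$ have disjoint (possibly empty) zero loci; as $Z$ has finitely many components and $r \geq 2$, a general tuple $(F_1, \ldots, F_r)$ therefore meets $Z$ in the empty set. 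This reduces the lemma to the claim that $L$ is globally generated along $Z$ whenever $N \geq \max(M, 0)$.

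Second I would prove global generation via the fibration. Writing $L = L_0 \otimes \pi^\star \OO_{\PP^1}(N)$, the degree computations of Section~\ref{sec: constructing f0} show that $L$ restricts to a line bundle of non-negative multidegree on every fibre of $\pi$ (degree $d \geq 1$ on horizontal components, and $d_0, d_j, 0$ on the components of the special fibres). Hence $R^1\pi_\star L = 0$, the push-forward $\pi_\star L$ is a vector bundle on $\PP^1$, and $L$ is relatively globally generated. It follows that $L$ is globally generated as soon as $\pi_\star L = (\pi_\star L_0) \otimes \OO_{\PP^1}(N)$ is globally generated on $\PP^1$; since a vector bundle $E$ on $\PP^1$ is globally generated precisely when $H^1(\PP^1, E(-1)) = 0$, the desired statement becomes the vanishing
\[ H^1\!\left(\Ccal, L_0 \otimes \pi^\star \OO_{\PP^1}(N-1)\right) = 0, \]
using the Leray spectral sequence and the projection formula. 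I would obtain this from a vanishing theorem of Kawamata--Viehweg type on the surface $\Ccal$, which has only cyclic quotient (hence klt) singularities: writing $L_0 \otimes \pi^\star\OO_{\PP^1}(N-1) = K_\Ccal + A$ and checking that the $\Q$-divisor $A$ is nef and big. Nefness reduces to verifying $A \cdot C \geq 0$ for the finitely many relevant curves $C_0, C_j, x_i$ and the fibre class, the binding inequality being exactly $N \geq M$; the hypothesis $N \geq 0$ then secures bigness.

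The main obstacle is this final intersection computation. Because the non-reduced blow-ups endow $\Ccal$ with cyclic quotient singularities, the exceptional curves $C_j$ are merely $\Q$-Cartier and one must work throughout with the fractional intersection numbers $C_0 C_j = 1/w_j$ and $w C_0^2 = -(m_1 + \cdots + m_k)$, as well as the weighted drops in the self-intersections of the marking sections $x_i \in |\OO_{\PP^1 \times \PP^1}(b_i, 1)|$ under these blow-ups. Carefully tracking these contributions through each intersection $A \cdot C$ is what produces the clean threshold
\[ M = d_0 + \Sigma_{j=1}^k d_j w_j - \Sigma_{i=1}^n \upalpha_i b_i - w. \]
Everything else — the reduction to the push-forward, the cohomology-and-base-change input, and the general-position argument on $\PP^1$ — is formal.
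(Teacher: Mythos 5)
Your route is genuinely different from the paper's: the paper proves the lemma by writing the $F_i$ down explicitly, as $F_i = \widetilde{G}_i + (N-M)T_i$ where $\widetilde{G}_i$ is the strict transform of an explicit curve $G_i = \sum_{j=0}^k d_j G_{ij}$ on $\PP^1\times\PP^1$ chosen to pass through each $p_j$ with tangency order $w_j$; the threshold $M$ falls out of comparing $\OO_\Ccal(\widetilde{G}_i)$ with $L$, and the emptiness of $V(f_0)\cap F_1\cap\cdots\cap F_r$ is arranged by genericity of the $G_{ij}$, with no cohomology vanishing anywhere. Your reduction — common zeros of $f_1,\ldots,f_r$ on $Z=V(f_0)$, global generation of $L$ along $Z$, relative global generation from non-negative multidegree on the tree fibres, and the equivalence of global generation of $\pi_\star L$ with $H^1(\Ccal, L\otimes\pi^\star\OO_{\PP^1}(-1))=0$ — is sound as far as it goes.

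The decisive step, however, fails. Write $A = L_0 + (N-1)F - K_\Ccal$ with $F$ the fibre class. Adjunction on $C_0\cong\PP^1$ with the orbifold corrections $1-1/w_j$ at the $k$ singular points (equivalently, $K_\Ccal=\varphi^\star K_{\PP^1\times\PP^1}+\sum_j w_jC_j+\sum_j E_j$ and $C_0C_j=1/w_j$) gives $K_\Ccal\cdot C_0 = k-2$, while $L_0\cdot C_0=d_0$ and $F\cdot C_0=0$. Hence $A\cdot C_0 = d_0-k+2$, which is negative whenever $k\geqslant d_0+3$; this occurs for genuine rocket divisors, e.g.\ $\upalpha=(3,0,0)$ with $d_0=0$ and three external components of degree $1$. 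Since $F\cdot C_0=0$, no choice of $N$ repairs this: $A$ is simply not nef and Kawamata--Viehweg does not apply as stated. Two further points: the claim that nefness ``reduces to verifying $A\cdot C\geqslant 0$ for the finitely many relevant curves'' is unjustified, as $\Ccal$ is a blowup of $\PP^1\times\PP^1$ in $k+l$ points and its cone of curves is not spanned by $C_0$, $C_j$, $x_i$ and $F$; and the threshold $M$ is asserted to emerge from the computation but never actually derived. The argument would need to be rebuilt around a different positivity input (or around the paper's explicit construction) to close these gaps.
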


\begin{proof}
For $i \in \{1,\ldots,r\}$ and $j \in \{1,\ldots,k\}$ choose a curve
\[ G_{ij} \in |\OO_{\PP^1 \times \PP^1}(w_j,1)| \]
that passes through the point $p_j$ with tangency order $w_j$ with respect to the horizontal line $\PP^1 \times \{ \pi_2(p_j) \}$. For $i \in \{1,\ldots,r\}$ and $j=0$ choose an arbitrary curve
\[ G_{i0} \in |\OO_{\PP^1 \times \PP^1}(1,1)|. \]
Generic choices of such curves satisfy the following.

\begin{specification}\label{assumption on Gij} For $i \in \{1,\ldots,r\}$ and $j\in \{0,1,\ldots,k\}$ the curve $G_{ij}$ does not pass through the points $p_1,\ldots,\widehat{p_j},\ldots,p_k,q_1,\ldots,q_l$. At $p_j$, the equations defining the curves
\[ G_{1j},\ldots,G_{rj} \]
have distinct partial derivatives of order $w_j$ with respect to the horizontal coordinate. The points
\[ G_{10} \cap \pi^{-1}(0),\ldots,G_{r0} \cap \pi^{-1}(0)\]
are all distinct. Away from $\pi^{-1}(0)$ the $G_{ij}$ intersect only pairwise and away from $\varphi(V(f_0))$, where $\varphi \colon \Ccal \to \PP^1 \times \PP^1$ is the blowdown.
\end{specification}

For $i \in \{1,\ldots,r\}$ define the (non-reduced) Cartier divisor
\begin{equation} \label{eqn: defn of Gi} G_i \colonequals \sum_{j=0}^k d_j G_{ij} \in |\OO_{\PP^1 \times \PP^1}(d_0 + \Sigma_{j=1}^k d_j w_j,d)| \end{equation}
and consider the strict transform
\[ \widetilde{G}_i \subseteq \Ccal.\]
Specification~\ref{assumption on Gij} ensures that the intersection $\varphi (V(f_0)) \cap G_1 \cap \ldots \cap G_r$ consists precisely of the points $p_1,\ldots,p_k$. Distinctness of the higher partial derivatives ensures that $G_1,\ldots,G_r$ are separated by the blowup. Since $r \geqslant 2$ we conclude that
\[ V(f_0) \cap \widetilde{G}_1 \ldots \cap \widetilde{G}_r = \emptyset. \]
It remains to compare $\OO_\Ccal(\widetilde{G}_i)$ and $L$. The tangency specification on $G_{ij}$ gives
\begin{align*} \widetilde{G}_{ij} & = \varphi^\star G_{ij} - w_j C_j \\
\widetilde{G}_{i0} & = \varphi^\star G_{i0}
\end{align*}
and so we have
\begin{equation} \label{eqn: Gi vs Gitilde} \widetilde{G}_i = \varphi^\star G_i - \Sigma_{j=1}^k d_j w_j C_j.\end{equation}
Now recall the arrangement of markings from Section~\ref{sec: arranging the markings}. Combining \eqref{eqn: defn of xi} and \eqref{eqn: defn of Gi} together with $\Sigma_{i=1}^n \upalpha_i = d$, we have
\begin{equation} \label{eqn: Ftilde in terms of markings} \OO_{\PP^1 \times \PP^1}(G_i) = \OO_{\PP^1 \times \PP^1}(d_0+\Sigma_{j=1}^k d_j w_j,d) = \OO_{\PP^1 \times \PP^1}(\Sigma_{i=1}^n \upalpha_i x_i) \otimes \pi_1^\star \OO_{\PP^1}(M^\prime) \end{equation}
where $M^\prime=d_0+\Sigma_{j=1}^k d_j w_j - \Sigma_{i=1}^n \upalpha_i b_i$. Combining \eqref{eqn: Gi vs Gitilde} and \eqref{eqn: Ftilde in terms of markings} we obtain
\begin{equation} \label{eqn: second expression for Fitilde} \OO_{\Ccal}(\widetilde{G}_i) = \OO_{\Ccal}(\varphi^\star \Sigma_{i=1}^n \upalpha_i x_i - \Sigma_{j=1}^k d_j w_j C_j) \otimes \pi^\star \OO_{\PP^1}(M^\prime).\end{equation}
To control the pullback of the marking divisors, we impose the following specification. This can always be achieved by increasing the vertical degrees $b_i$.

\begin{specification} \label{assumption marking tangent} Consider the marking divisor $x_i \subseteq \PP^1 \times \PP^1$ given in \eqref{eqn: defn of xi}. If $x_i$ passes through the point $p_j \in \pi^{-1}(0)$ (i.e. if $i \in A_j$), then at this point the curve $x_i$ has tangency order $w_j$ with respect to the horizontal line.
\end{specification}

This gives the following comparison between $x_i \subseteq \PP^1 \times \PP^1$ and its strict transform $x_i \subseteq \Ccal$
\begin{equation*} \label{eqn: strict transform of xi}
\varphi^\star x_i =
\begin{cases}
x_i + w_j C_j + \sum_{q_j \in x_i} E_j \qquad \text{if $i \in A_{j\neq 0}$} \\\medskip
x_i + \sum_{q_j \in x_i} E_j \qquad \qquad \quad \, \, \, \text{if $i \in A_0$}
\end{cases}
\end{equation*}
from which we obtain
\[ \varphi^\star \Sigma_{i=1}^n \upalpha_i x_i = \Sigma_{i=1}^n \upalpha_i x_i + \Sigma_{j=1}^k \left( \Sigma_{i \in A_j} \upalpha_i \right) w_j C_j + \Sigma_{j=1}^l u_j E_j.\]
Substituting this into \eqref{eqn: second expression for Fitilde} we obtain
\begin{align*} 
\OO_{\Ccal}(\widetilde{G}_i) & = \OO_{\Ccal}(\Sigma_{i=1}^n \upalpha_i x_i + \Sigma_{j=1}^k \left( -d_j + \Sigma_{i \in A_j} \upalpha_i \right) w_j C_j + \Sigma_{j=1}^l u_j E_j) \otimes \pi^\star \OO_{\PP^1}(M^\prime)\\
& = \OO_{\Ccal}(\Sigma_{i=1}^n \upalpha_i x_i + \Sigma_{j=1}^k (-m_j) w_j C_j + \Sigma_{j=1}^l u_j E_j) \otimes \pi^\star \OO_{\PP^1}(M^\prime).
\end{align*}
For all $j \in \{1,\ldots,k\}$ we have $m_j w_j=w$. Moreover since the fibre $\pi^{-1}(0)$ is the reduced union $C_0+C_1+\ldots+C_k$ we have
\[ \OO_{\Ccal}(\Sigma_{j=1}^k (-m_j w_j) C_j) = \OO_{\Ccal}(-w \Sigma_{j=1}^k C_j) = \OO_{\Ccal}(wC_0) \otimes \pi^\star \OO_{\PP^1}(-w).\]
Setting $M=M^\prime-w$ we obtain
\begin{align*}
\OO_{\Ccal}(\widetilde{G}_i) & = \OO_{\Ccal}(\Sigma_{i=1}^n \upalpha_i x_i + wC_0 + \Sigma_{j=1}^l u_j E_j) \otimes \pi^\star \OO_{\PP^1}(M)\\
& = L \otimes \pi^\star \OO_{\PP^1}(M-N).
\end{align*}
Since $N \geqslant M$ we may choose a general fibre $T_i \in |\pi^\star \OO_{\PP^1}(1)|$ and consider the effective divisor
\[ F_i \colonequals \widetilde{G}_i + (N-M) T_i.\]
If the $T_i$ are chosen generically then since $r \geqslant 2$ the $F_i$ still satisfy $V(f_0) \cap F_1 \cap \ldots \cap F_r = \emptyset$. Moreover we now have
\[ \OO_{\Ccal}(F_i) = L\]
as required.
\end{proof}

\subsection{Logarithmic structures} Finally, equip the surface $\Ccal$ with the divisorial logarithmic structure corresponding to the reduced union of: the marking divisors $x_i$, the singular fibres of $\pi$, and the smooth fibres $B$. Equip the base $\PP^1$ with the divisorial logarithmic structure corresponding to the reduced image of the singular fibres of $\pi$ and the smooth fibres $B$. The logarithmic enhancements of the morphisms $f$ and $\pi$ are then unique, producing a diagram of logarithmic schemes
\[
\begin{tikzcd}
(\Ccal,M_{\Ccal}) \ar[r,"f"] \ar[d,"\pi"]	 & (\PP^r|H) \\
(\PP^1,M_{\PP^1}). &
\end{tikzcd}
\]
This family of stable logarithmic maps is automatically basic \cite[Section~1.5]{GrossSiebertLog}, as it only intersects codimension~$1$ strata which all have basic monoid $\N$. We thus obtain the desired test curve
\[ \PP^1 \to \Mlog \]
and this completes the construction.

\section{Relations} \label{sec: relations}

\subsection{Pulling back divisors} \label{sec: pulling back divisors} Given a test curve
\[ \PP^1 \to \Mlog \]
we wish to pull back elements of the class group of $\Mlog$. This requires some care, since the divisors may not be Cartier and the test curve may not be a regular embedding (or even a closed embedding). Let
\[ W \subseteq \Mlog \]
denote the complement of the boundary strata of codimension $\geqslant 2$. Since $\Mlog$ is logarithmically smooth, it follows that $W$ is smooth (because all normal toric varieties of dimension $\leqslant 1$ are smooth). By the construction in Section~\ref{sec: test curves}, the test curve factors through this open subset
\[ \PP^1 \to W \hookrightarrow \Mlog.\]
Given an element in the class group of $\Mlog$, we first pull it back along the flat morphism $W \hookrightarrow \Mlog$. Since $W$ is smooth the resulting divisor is Cartier, so we can pull it back along the morphism $\PP^1 \to W$.

Equivalently, $\PP^1 \to W$ is l.c.i. (since both source and target are smooth) and $W \hookrightarrow \Mlog$ is l.c.i. (since it is in fact smooth). The composite $\PP^1 \to \Mlog$ is therefore l.c.i. and induces a pullback on Chow groups \cite[Section~6.6]{FultonBig}.

We will refer to the above process as "intersecting a divisor on $\Mlog$ with a test curve". The result agrees with the intuitive expectation.

\subsection{Terrestrial, airborne and alien divisors} \label{sec: defn alien divisor} The test curves constructed above intersect the boundary of $\Mlog$ at finitely many points away from the central point $0$. These additional intersections occur only at a special class of boundary divisors, which play a key role in the forthcoming analysis.

\begin{definition} \label{def: alien and terrestrial} The \textbf{airborne} divisor is the boundary divisor in $\Mlog$ whose generic point parametrises a smooth curve mapped inside $H$. The combinatorial type is
\[
\begin{tikzpicture}
	
\draw[fill=black] (-1,-0.5) circle[radius=2pt];
\draw[blue] (-1,-0.5) node[above]{\scriptsize$d$};

\draw[->] (-1,-0.5) -- (-0.5,-0.25);
\draw[->] (-1,-0.5) -- (-0.5,-0.75);

\draw[fill=blue,blue] (-2,-1.75) circle[radius=2pt];
\draw[blue,->] (-2,-1.75) -- (0,-1.75);

\draw[blue,->] (-1,-1) -- (-1,-1.5);
\draw[blue] (-1,-1.25) node[right]{\scriptsize$\mathsf{f}$};

\end{tikzpicture}
\]
The \textbf{alien} divisors are the boundary divisors in $\Mlog$ whose generic point parametrises a curve with two irreducible components, one of which contains two markings and has degree $0$, the other of which contains all the other markings and has degree $d$. Since $d \geqslant 1$ there are precisely ${n \choose 2}$ of these, and they are denoted $D_{ij}$ where $1 \leqslant i < j \leqslant n$ index the two markings. Their combinatorial types are:
\begin{center}
\begin{tikzpicture}
	
\draw[fill=black] (-2,-0.5) circle[radius=2pt];
\draw[blue] (-2,-0.5) node[above]{\scriptsize$d$};

\draw (-2,-0.5) -- (-1.5,-0.5);

\draw[fill=black] (-1.5,-0.5) circle[radius=2pt];
\draw[blue] (-1.5,-0.5) node[above]{\scriptsize$0$};

\draw[->] (-1.5,-0.5) -- (-1,-0.25);
\draw (-1.1,-0.25) node[right]{\scriptsize$x_i$};

\draw[->] (-1.5,-0.5) -- (-1,-0.75);
\draw (-1.1,-0.75) node[right]{\scriptsize$x_j$};

\draw[fill=blue,blue] (-2,-1.75) circle[radius=2pt];
\draw[blue,->] (-2,-1.75) -- (-1,-1.75);

\draw[blue,->] (-1.5,-1) -- (-1.5,-1.5);
\draw[blue] (-1.5,-1.25) node[right]{\scriptsize$\mathsf{f}$};

\draw (-1.5,-2.5) node{$\upalpha_i+\upalpha_j > 0$};
	
\draw[fill=black] (1,-1) circle[radius=2pt];
\draw[blue] (1,-1) node[left]{\scriptsize$d$};

\draw (1,-1) -- (1,-0.5);
\draw[fill=black] (1,-0.5) circle[radius=2pt];
\draw[blue] (1,-0.5) node[left]{\scriptsize$0$};

\draw[->] (1,-0.5) -- (1.25,0);
\draw (1.25,0) node[above]{\scriptsize$x_i$};

\draw[->] (1,-0.5) -- (0.75,0);
\draw (0.75,0) node[above]{\scriptsize$x_j$};

\draw[fill=blue,blue] (1,-1.75) circle[radius=2pt];
\draw[blue,->] (1,-1.75) -- (2,-1.75);

\draw[blue,->] (1.5,-1) -- (1.5,-1.5);
\draw[blue] (1.5,-1.25) node[right]{\scriptsize$\mathsf{f}$};

\draw (1.5,-2.5) node{$\upalpha_i+\upalpha_j = 0$};

\end{tikzpicture}
\end{center}

\noindent Boundary divisors which are neither airborne nor alien are referred to as \textbf{terrestrial}. Note that this taxonomy of boundary divisors differs from the one given in Proposition~\ref{prop: comb types of boundary divisors}; aliens and terrestrials may be either rockets or binaries.
\end{definition}

The test curves of Section~\ref{sec: test curves} are constructed in such a way that they do not intersect any terrestrial divisor away from the central point $0$.

\subsection{Linear independencies} We now leverage our control over test curves to deduce the main theorem.

\begin{proposition} \label{prop: airborne divisor zero coefficient} The airborne divisor has coefficient zero in any linear relation in $\ClQ (\Mlog)$ between boundary divisors.
\end{proposition}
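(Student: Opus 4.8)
The plan is to leverage the free parameter $N \geqslant \max(M,0)$ appearing in the test curve construction of Section~\ref{sec: test curves}. Fix any rocket divisor $D$ and carry out the construction of Sections~\ref{sec: arranging the markings}--\ref{sec: constructing f1 to fm}, producing for each admissible $N$ a test curve $\gamma_N \colon \PP^1 \to \Mlog$. The dependence on $N$ enters only through the line bundle $L$ of \eqref{eqn: line bundle L} and through the smooth fibres $B$, cut out by a generic section of $\pi^\star \OO_{\PP^1}(N)$. I would first record where $\gamma_N$ meets the boundary: over the point $0$ (where the central fibre is the rocket $D$), over the finitely many points $\pi(q_j)$ (where the fibres are nodal, giving alien divisors), and over the $N$ points of $B$ (where the source is smooth and $f_0$ vanishes identically, so the fibre maps into $H = V(f_0)$ and is an airborne point in the sense of Definition~\ref{def: alien and terrestrial}). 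These are the only boundary intersections, since the singular fibres of $\pi$ lie over $0$ and the $\pi(q_j)$, while every smooth fibre outside $B$ is an interior point.

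The crux is the observation that $\gamma_N \cdot D'$ is independent of $N$ for every non-airborne boundary divisor $D'$. The surface $\Ccal$, the projection $\pi$, the markings, and the fibres over $0$ and over the $\pi(q_j)$ are all independent of $N$; moreover the entire $N$-dependence of the map $f = [f_0,\ldots,f_r]$ is concentrated at the fibres $B$, away from $0$ and the $q_j$ (near those points the section cutting out $B$ is a unit, and the auxiliary fibres $T_i$ of Lemma~\ref{lem: BP free curves} may be taken disjoint from them). Since $\gamma_N \cdot D'$ is computed as the degree of the pullback $\gamma_N^\star D'$ to $\PP^1$ (Section~\ref{sec: pulling back divisors}), and this pullback is supported exactly where the family degenerates into $D'$, namely at $0$ and the $\pi(q_j)$, its value is governed by the $N$-independent local geometry there. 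By contrast the $N$ fibres of $B$ each contribute equally to the airborne intersection, so $\gamma_N \cdot (\text{airborne}) = c_0 N$ for a fixed constant $c_0 > 0$, which is strictly increasing in $N$.

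The conclusion then follows by a one-line surgery on test curves. Let $\sum_{D'} a_{D'} D' = 0$ be a linear relation between boundary divisors in $\ClQ(\Mlog)$ and pick admissible $N_1 \neq N_2$. Intersecting with $\gamma_{N_1}$ and $\gamma_{N_2}$ yields $\sum_{D'} a_{D'} (\gamma_{N_i} \cdot D') = 0$ for $i = 1,2$; subtracting, every non-airborne term cancels by the previous paragraph, leaving $a_{\text{airborne}} \, c_0 (N_2 - N_1) = 0$, whence $a_{\text{airborne}} = 0$.

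I expect the main obstacle to be the middle step: justifying rigorously that the intersection numbers against all non-airborne boundary divisors are unchanged as $N$ varies. This rests on the principle that the pullback of a boundary divisor to the test curve is supported precisely at the fibres where the family degenerates, together with the fact that the degeneration behaviour at $0$ and at the $q_j$ — including the logarithmic structure and the local expansions of $f_0,\ldots,f_r$ — is manifestly independent of $N$. Pinning down this localisation, and confirming that each fibre of $B$ contributes the same positive amount to the airborne intersection, is the only delicate point; the concluding linear algebra is routine.
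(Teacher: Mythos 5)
Your argument is essentially the paper's own: the paper performs exactly your surgery in the special case $N_2 = N_1+1$, replacing $L$ by $L \otimes \pi^\star \OO_{\PP^1}(1)$ and each $f_i$ by $f_i t_i$ with $t_i$ cutting out a generic fibre, which adds exactly one airborne intersection point while leaving all other boundary intersections unchanged, and then subtracts the two evaluations of the relation. The localisation point you flag is handled there just as you suggest, by genericity of the added fibres (disjoint from $0$ and the $\pi(q_j)$), so the proposal is correct and matches the paper's route.
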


\begin{proof} Take such a linear relation. Construct any test curve in $\Mlog$ using the techniques of Section~\ref{sec: test curves}. We then modify this test curve as follows. Replace $L$ by $L \otimes \pi^\star \OO_{\PP^1}(1)$ and replace every $f_i$ by $f_i t_i$ for $t_i \in H^0(\Ccal,\pi^\star \OO_{\PP^1}(1))$ a section cutting out a fibre of $\pi$. If the $t_i$ are chosen generically then the map $[f_0t_0,\ldots,f_rt_r]$ will be basepoint-free. We obtain a new test curve whose intersection with the boundary is the same as the original test curve, except that it intersects the airborne divisor in one more point than before. Intersecting both test curves with the given linear relation and taking the difference, we conclude that the coefficient of the airborne divisor is zero.
\end{proof}

\begin{proposition} \label{prop: airborne and terrestrial linearly independent set} The union of the terrestrial divisors and the airborne divisor forms a linearly independent subset of $\ClQ(\Mlog)$.
\end{proposition}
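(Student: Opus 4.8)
The plan is to establish linear independence by exhibiting, for each divisor in the proposed set, a test curve that isolates it from the others. I would first dispose of the airborne divisor using Proposition~\ref{prop: airborne divisor zero coefficient}: in any relation $\sum_D c_D D = 0$ among boundary divisors, the airborne coefficient $c_{\text{air}}$ vanishes. Thus it suffices to prove that the \emph{terrestrial} divisors alone are linearly independent, and that the airborne divisor is not in their span—but the latter is immediate once $c_{\text{air}}=0$ in every relation, so the two sets cannot interact. Concretely, any relation involving the union reduces to a relation among terrestrials, and linear independence of the terrestrials closes the argument.

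The heart of the matter is therefore linear independence of the terrestrial divisors. Here I would invoke the key consequence of the test curve construction stated just before the proposition: \emph{for every terrestrial divisor $D$ there exists a test curve $\gamma_D \colon \PP^1 \to \Mlog$ meeting $D$ but meeting no other terrestrial divisor} (away from the central point, the test curves only touch airborne and alien divisors, never other terrestrials). Given a relation $\sum_{D} c_D D = 0$ with the sum ranging over terrestrial and airborne divisors, I intersect both sides with $\gamma_{D_0}$ for a fixed terrestrial $D_0$, using the l.c.i.\ pullback of Section~\ref{sec: pulling back divisors}. On the right we get $0$; on the left, every terrestrial $D \neq D_0$ contributes nothing since $\gamma_{D_0}$ misses it, and the airborne term contributes nothing since $c_{\text{air}}=0$. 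The only surviving term is $c_{D_0}\,(\gamma_{D_0} \cdot D_0)$.

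It remains to check that the intersection number $\gamma_{D_0} \cdot D_0$ is nonzero, which forces $c_{D_0}=0$; ranging over all terrestrial $D_0$ then yields the claim. This positivity is built into the construction: $\gamma_{D_0}$ passes through $D_0$ at $0 \in \PP^1$ transversely, with the family's central fibre $\pi^{-1}(0)$ realising the combinatorial type of $D_0$, so the local intersection multiplicity is a positive integer. I expect this verification—that the test curve meets $D_0$ with strictly positive multiplicity at the central point and that the l.c.i.\ pullback records this correctly—to be the main obstacle, since it requires knowing that the test curve is not tangent to or contained in the boundary in an unexpected way and that the intersection is supported at $0$. The transversality and codimension-one assertions from Section~\ref{sec: test curves} (the curve meets the boundary away from codimension-$\geqslant 2$ strata) are exactly what guarantees this. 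One subtlety worth flagging is that the test curves may also meet \emph{alien} divisors away from $0$; but aliens do not appear in the set under consideration, so these extra intersections are harmless for the present argument and only become relevant when one later upgrades to include the alien divisors.
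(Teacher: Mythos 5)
Your proposal is correct and follows essentially the same route as the paper: reduce to the terrestrial divisors via Proposition~\ref{prop: airborne divisor zero coefficient}, then for each terrestrial $D$ intersect the relation with the test curve from Section~\ref{sec: test curves} that meets $D$ (transversely, in a single point) and no other terrestrial divisor. Your additional care about the positivity of the intersection number and the harmlessness of alien intersections is implicit in the paper's (much terser) argument.
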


\begin{proof} By the previous proposition, it suffices to show that the set of terrestrial divisors forms a linearly independent subset. Suppose we are given a linear relation amongst the terrestrial divisors, and fix an arbitrary terrestrial divisor $D$. The construction in Section~\ref{sec: test curves} produces a test curve which intersects $D$ in a single point and does not intersect any other terrestrial divisors. Intersecting this test curve with the given linear relation, we conclude that the coefficient of $D$ is zero. Since $D$ was arbitrary, this completes the proof.	
\end{proof}

\begin{definition} Let $S \subseteq \ClQ(\Mlog)$ denote the subspace with basis the set of terrestrial and airborne divisors, and denote the quotient
\[  Q \colonequals \ClQ (\Mlog)/S.\]	
\end{definition}

\begin{proposition} \label{prop: dim of Q} The quotient $Q$ is $n$-dimensional. All boundary relations in $Q$ arise by pullback from $\Mbarn$.
\end{proposition}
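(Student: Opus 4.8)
The plan is to bound $\dim Q$ both above and below, and then to deduce the statement about relations by comparing the two bounds. First observe that by Proposition~\ref{prop: generated by boundary strata} together with the definition of $S$, the quotient $Q$ is spanned by the images $\overline{D}_{ij}$ of the ${n \choose 2}$ alien divisors. Thus $Q$ is a quotient of $\Q^{\binom{n}{2}}$, and the whole proposition reduces to understanding the relations amongst the $\overline{D}_{ij}$; I will write $R_{\mathrm{all}} \subseteq \Q^{\binom{n}{2}}$ for the full space of such relations, so that $Q = \Q^{\binom{n}{2}}/R_{\mathrm{all}}$.

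For the upper bound I would pull back the WDVV relations along the forgetful morphism $\tau \colon \Mlog \to \Mbarn$ that forgets the map and stabilises the $n$-marked source. Since $\Mbarn$ is smooth, its boundary divisors $\delta_{0,S}$ are Cartier and, for distinct markings $i,j,k,l$, satisfy
\[ \sum_{\substack{i,j \in S \\ k,l \notin S}} \delta_{0,S} \ \sim\ \sum_{\substack{i,k \in S \\ j,l \notin S}} \delta_{0,S} \ \sim\ \sum_{\substack{i,l \in S \\ j,k \notin S}} \delta_{0,S}. \]
The key computation is that $\tau^\star \delta_{0,S}$ is a sum of boundary divisors of $\Mlog$ whose source curve splits the markings as $S \sqcup S^c$. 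By Definition~\ref{def: alien and terrestrial}, such a divisor is alien precisely when exactly two markings sit on a degree-$0$ component, so $\tau^\star\delta_{0,S}$ contains an alien only when $\min(|S|,|S^c|)=2$, in which case the unique alien appearing is $\overline{D}_{ij}$ with $\{i,j\}$ the two-element part (all remaining contributions being terrestrial). Reducing modulo $S$, and checking the multiplicity of this alien is $1$, the WDVV relations become the four-term relations
\[ \overline{D}_{ij} + \overline{D}_{kl} = \overline{D}_{ik} + \overline{D}_{jl} = \overline{D}_{il} + \overline{D}_{jk}. \]
A direct linear-algebra computation shows that the quotient of $\Q^{\binom{n}{2}}$ by the span $R_{\mathrm{WDVV}}$ of these relations is exactly $n$-dimensional: a functional $\phi_{ij}$ descends iff $\phi_{ij}+\phi_{kl}=\phi_{ik}+\phi_{jl}$, and the solution space of this system is precisely $\{\phi_{ij}=a_i+a_j\}$, of dimension $n$. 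Since $R_{\mathrm{WDVV}} \subseteq R_{\mathrm{all}}$, this gives $\dim Q \leqslant n$.

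For the lower bound I would exhibit the size-$n$ set of aliens \eqref{eqn: linearly independent set in Q} and prove it linearly independent in $Q$ using the test-curve machinery of Section~\ref{sec: test curves}. Each alien is a rocket or binary (Proposition~\ref{prop: comb types of boundary divisors}), so it can be arranged to occur as the central fibre of a test curve. The remaining boundary intersections of such a curve are the airborne divisor (at the smooth fibres $B$) and the aliens arising at the crossing points $q$ of the marking sections. Given a purported relation $\sum c_\bullet \overline{D}_\bullet = 0$ in $Q$, that is $\sum c_\bullet D_\bullet \in S$ in $\ClQ(\Mlog)$, one intersects with these test curves: the airborne contribution is discarded using Proposition~\ref{prop: airborne divisor zero coefficient}, the terrestrials are never met by construction, and the crossing-point aliens are controlled by the choice of marking sections in Section~\ref{sec: arranging the markings}. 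Extracting coefficients one at a time then forces all $c_\bullet=0$, giving $\dim Q \geqslant n$. I expect the main obstacle to lie here: both the multiplicity bookkeeping in $\tau^\star\delta_{0,S}$ (verifying that aliens occur with coefficient exactly one and that no other alien-type contributions appear) and, on the test-curve side, arranging the marking sections so that the incidental aliens at the crossing points $q_1,\dots,q_l$ do not interfere with the chosen set \eqref{eqn: linearly independent set in Q}.

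Finally, combining the bounds gives $\dim Q = n$. Since $R_{\mathrm{WDVV}} \subseteq R_{\mathrm{all}}$ induces a surjection $\Q^{\binom{n}{2}}/R_{\mathrm{WDVV}} \twoheadrightarrow Q$ between spaces of equal dimension $n$, this surjection is an isomorphism, whence $R_{\mathrm{all}} = R_{\mathrm{WDVV}}$. Therefore every boundary relation in $Q$ is a pullback of the WDVV relations from $\Mbarn$, as claimed.
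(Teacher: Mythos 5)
Your proposal is correct and follows the same overall strategy as the paper: span $Q$ by the $\binom{n}{2}$ aliens, bound $\dim Q$ above via pulled-back WDVV relations, bound it below by testing the set \eqref{eqn: linearly independent set in Q} against curves built with $b_i=1$, $b_{\neq i}=0$ after dropping Specification~\ref{conditions central fibre}, and conclude by comparing the two bounds. The lower-bound half is essentially identical to the paper's. Where you differ is the upper bound: the paper invokes the explicit linearly independent family of $\binom{n-1}{2}-1$ relations from Proposition~\ref{prop: basis for WDVV}, checks (via the same upper-triangular pairing $\mathcal{R}(M)\leftrightarrow E(M)$) that these stay independent after pullback to $Q$, and subtracts; you instead reduce \emph{all} WDVV relations modulo $S$ to the four-term relations $\overline{D}_{ij}+\overline{D}_{kl}=\overline{D}_{ik}+\overline{D}_{jl}$ and compute the quotient directly by identifying its dual with the space of functionals $\phi_{ij}=a_i+a_j$. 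Your route is self-contained and arguably cleaner (it does not need Appendix~\ref{sec: divisors on Mbarn} for this proposition, and it is insensitive to the exact nonzero multiplicities since rescaling each $\overline{D}_{ij}$ is a change of basis), at the cost of a small standalone linear-algebra lemma; the paper's route reuses the appendix, which it needs anyway to state the final form of Theorem~\ref{thm: main}. Both arguments rest on the same geometric input that you correctly flag: the pullback of $\delta_{0,\{i,j\}}$ meets the alien $D_{ij}$ with nonzero (in the paper, coefficient-one) multiplicity and contributes no other aliens, which the paper likewise asserts without further elaboration.
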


\begin{proof} By Proposition~\ref{prop: generated by boundary strata}, $Q$ is spanned by the alien divisors. There are precisely ${n \choose 2}$ of these. On the other hand, Proposition~\ref{prop: basis for WDVV} below exhibits a set of
$${n-1 \choose 2} - 1$$ independent relations between the boundary divisors in $\Mbar_{0,n}$. These pull back to relations in $\ClQ(\Mlog)$ which induce relations in the quotient $Q$. We claim that these relations are independent.

Indeed, the divisor $E_{ij} \subseteq \Mbarn$ defined in Appendix~\ref{sec: divisors on Mbarn} pulls back to the class of the alien divisor $D_{ij}$ in $Q$. The same argument as in the proof of Proposition~\ref{prop: basis for WDVV} then shows that the relations are independent. It follows that the dimension of $Q$ is bounded from above:
\[ \dim Q \leqslant {n \choose 2} - \left( {n-1 \choose 2}-1 \right) = n.\]
We claim that these are all the relations in $Q$. For this, we will show that $\dim Q \geqslant n$, by exhibiting a linearly independent subset of size $n$. Consider the following alien divisors
\begin{equation} \label{eqn: linearly independent set in Q} D_{12},D_{13},\ldots,D_{1n},D_{23}.\end{equation}
Suppose that there is a linear relation between these in $Q$, with $D_{ij}$ having coefficient $a_{ij}$. This implies that the corresponding linear combination in $\Mlog$ belongs to the span of the airborne and terrestrial divisors. However, by Proposition~\ref{prop: airborne divisor zero coefficient}, it in fact belongs to the span of the terrestrial divisors, giving the following relation in $\ClQ(\Mlog)$
\begin{equation} \label{eqn: relation divisors in proof Q n-dimensional} a_{12} D_{12} + a_{13} D_{13} + \ldots + a_{1n} D_{1n} + a_{23} D_{23} = \sum_{D} a_D D \end{equation}
where the sum on the right-hand side is over the set of terrestrial divisors.

We now construct test curves. This requires a minor retooling of the assembly line of Section~\ref{sec: test curves}. Namely, we drop Specification~\ref{conditions central fibre} governing the behaviour of the marking divisors on the central fibre. In this way we produce a test curve which does not intersect any terrestrial divisor. This also enables us to drop Specification~\ref{assumption marking tangent}, and as such we may choose the $b_i$ in \eqref{eqn: defn of xi} arbitrarily (previously each $b_i$ was bounded from below by the corresponding $w_j$).

Fix $i \in \{1,\ldots,n\}$ and arrange markings as in Section~\ref{sec: arranging the markings} with
\[ b_i = 1, \quad b_{\neq i} =0.\]
This ensures that $x_i \cdot x_{\neq i} = 1$ whilst $x_j \cdot x_k = 0$ otherwise. The resulting test curve does not intersect any terrestrial divisors, and intersects $D_{jk}$ if and only if $i \in \{j,k\}$. Intersecting against \eqref{eqn: relation divisors in proof Q n-dimensional} we obtain the following equations
\begin{alignat*}{2}
i=1: \qquad & a_{12} + a_{13} + \ldots + a_{1n}\ & = 0 \\
i=2: \qquad & a_{12} + a_{23} & = 0 \\
i=3: \qquad & a_{13} + a_{23} & = 0 \\
i=4: \qquad & a_{14} & = 0 \\
& \qquad \qquad \vdots & \\
i=n: \qquad & a_{1n} & = 0
\end{alignat*}
which are easily solved to give all $a_{ij}=0$. We conclude that the set \eqref{eqn: linearly independent set in Q} is linearly independent, as required. \end{proof}

Combining Propositions~\ref{prop: airborne and terrestrial linearly independent set} and \ref{prop: dim of Q} we conclude the main result.
\begin{theorem}[Theorem~\ref{thm: main introduction}] \label{thm: main} For $r \geqslant 2, n \geqslant 3, d \geqslant 1$, and $\upalpha \vdash d$, the dimension of $\ClQ(\Mlog)$ is
\[ N(\upalpha) - {n-1 \choose 2} + 1.\]
In particular, it does not depend on $r$. A basis is given by the union of the airborne and terrestrial divisors and the subset \eqref{eqn: linearly independent set in Q} of the alien divisors. Moreover, all relations amongst boundary divisors are pulled back from $\Mbarn$.
\end{theorem}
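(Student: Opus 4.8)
The plan is to assemble the theorem directly from the three preceding propositions, treating it as a synthesis and a dimension count rather than a fresh geometric argument. First I would recall that by Proposition~\ref{prop: generated by boundary strata} the group $\ClQ(\Mlog)$ is generated by boundary divisors, and that Definition~\ref{def: alien and terrestrial} partitions these into the single airborne divisor, the $\binom{n}{2}$ alien divisors $D_{ij}$, and the remaining terrestrial divisors. Since there are $N(\upalpha)$ boundary divisors in total, the number of terrestrial divisors is $N(\upalpha) - 1 - \binom{n}{2}$.

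Next I would exhibit the basis. By Proposition~\ref{prop: airborne and terrestrial linearly independent set} the airborne divisor together with the terrestrial divisors form a linearly independent set, spanning the subspace $S$ of dimension $N(\upalpha) - \binom{n}{2}$. Passing to the quotient $Q = \ClQ(\Mlog)/S$, Proposition~\ref{prop: dim of Q} shows that $Q$ is $n$-dimensional, with the $n$ alien divisors listed in \eqref{eqn: linearly independent set in Q} forming a basis. Lifting this basis and adjoining the basis of $S$ yields a basis of $\ClQ(\Mlog)$ consisting of the airborne divisor, the terrestrial divisors, and the $n$ chosen aliens. The dimension is then $\dim S + \dim Q = \big(N(\upalpha) - \binom{n}{2}\big) + n$, which rearranges to the claimed $N(\upalpha) - \binom{n-1}{2} + 1$ via the elementary identity $\binom{n}{2} - n = \binom{n-1}{2} - 1$. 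Independence from $r$ is then immediate, since neither $N(\upalpha)$ nor $n$ involves $r$.

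For the final assertion I would identify the space of relations $R = \ker\big(\bigoplus_D \Q \cdot D \to \ClQ(\Mlog)\big)$ with the relation space $R_Q$ of the quotient $Q$. Projection onto the alien coordinates sends $R$ into $R_Q$; this map is injective because a relation supported only on terrestrial and airborne divisors must vanish by Proposition~\ref{prop: airborne and terrestrial linearly independent set}, and it is surjective because any relation in $Q$ expresses a combination of aliens as an element of $S$, whose unique expansion in the independent spanning set of $S$ lifts it back into $R$. Hence $R \cong R_Q$, both of dimension $\binom{n-1}{2} - 1$. Since Proposition~\ref{prop: dim of Q} supplies exactly $\binom{n-1}{2} - 1$ independent relations in $Q$ arising as pullbacks of the WDVV relations from $\Mbarn$, these pullbacks span $R_Q$ and therefore, under the isomorphism, span all of $R$. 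This shows that every relation among boundary divisors is pulled back from $\Mbarn$.

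The genuinely hard work has already been expended in the test curve construction of Section~\ref{sec: test curves} and the linear independence statements it powers, so the remaining argument is essentially bookkeeping. The one point requiring care is the relations step: I expect the main subtlety to be the observation that the linear independence of the airborne and terrestrial divisors forces $R$ to inject into $R_Q$, so that no relation can be concealed among the terrestrial coordinates and the entire relation space is accounted for by WDVV pullbacks. Everything else reduces to the dimension count and the binomial identity above.
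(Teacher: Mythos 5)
Your proposal is correct and follows exactly the paper's own (one-line) proof, which simply combines Propositions~\ref{prop: airborne and terrestrial linearly independent set} and \ref{prop: dim of Q}; your dimension count, the identity $\binom{n}{2}-n=\binom{n-1}{2}-1$, and the identification of the full relation space with that of the quotient $Q$ are all the intended bookkeeping. No gaps.
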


\subsection{Picard group} \label{sec: Picard} The space $\Mlog$ is logarithmically smooth, hence normal. It follows that the Picard group embeds in the class group
\begin{equation} \label{eqn: Picard injects into class} \PicQ \Mlog \hookrightarrow \ClQ \Mlog \end{equation}
as the locally principal divisor classes, see \cite[Theorem~11.38(1)]{GortzWedhorn} and \cite[Remark~II.6.1.12]{HartshorneAG}. Applying Proposition~\ref{prop: generated by boundary strata} we see that the Picard group is generated by Cartier divisors supported on the boundary. These admit the following description.

\begin{lemma} \label{lem: Cartier divisors are PL functions} Boundary Cartier divisors on $\Mlog$ are indexed by integral piecewise-linear functions on the moduli space $\Sigma_{0,\upalpha}$ of stable tropical maps to $\R_{\geqslant 0}$.
\end{lemma}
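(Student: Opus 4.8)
The plan is to invoke the standard toroidal dictionary between boundary Cartier divisors and piecewise-linear functions on the cone complex, and then transport the latter across the faithful tropicalisation isomorphism of Theorem~\ref{thm: faithful tropicalisation}. Since $\Mlog$ is logarithmically smooth it is a toroidal embedding: étale-locally it is modelled on (a finite quotient of) an affine toric variety $\Spec \kfield[S_\sigma]$, where $\sigma$ is the cone of $\Sigma\Mlog$ indexing the stratum in question and $S_\sigma = \sigma^\vee \cap M_\sigma$ for $M_\sigma = N_\sigma^\vee$ the dual of the tropical lattice carried by $\sigma$. The codimension-one boundary strata correspond to the rays of $\sigma$, and these local models glue over the cone complex $\Sigma\Mlog$.

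The first step is the local identification. On an affine toric chart the Picard group is trivial, so every boundary Cartier divisor is principal, cut out by a single monomial $\chi^m$; since the chart carries no nonconstant units, the restriction $m|_\sigma$ is a well-defined integral linear function on $\sigma$, and conversely each $m \in M_\sigma$ determines such a divisor. The second step is to glue: on an overlap of two charts, corresponding to a common face, the two local exponents must agree as linear functions along that face, which is precisely the compatibility required for the collection $\{m_\sigma\}$ to assemble into a single integral piecewise-linear function on $\Sigma\Mlog$. This yields the bijection between boundary Cartier divisors on $\Mlog$ and integral piecewise-linear functions on $\Sigma\Mlog$; see \cite{KKMSD} for the toroidal form of this dictionary. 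We emphasise that the cones $\sigma$ are genuinely singular (witness the quadric cone of Section~\ref{sec: class group not Picard}), which is exactly why not every boundary Weil divisor is Cartier, and hence why the indexing set is piecewise-linear \emph{functions} rather than arbitrary ray-weightings.

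Finally, Theorem~\ref{thm: faithful tropicalisation} furnishes an isomorphism of integral cone complexes $\Sigma\Mlog \cong \Sigma_{0,\upalpha}$. Pulling back integral piecewise-linear functions across this isomorphism identifies the two spaces of functions, and composing with the bijection of the previous step gives the claimed indexing of boundary Cartier divisors by integral piecewise-linear functions on $\Sigma_{0,\upalpha}$.

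The step I expect to require the most care is the local identification in the Deligne--Mumford setting: one must ensure that the lattice $M_\sigma$ with respect to which ``integral'' is measured is exactly the one dual to the characteristic monoid of $\Mlog$, and that the faithful tropicalisation isomorphism respects this integral structure, so that integral piecewise-linear functions correspond on the nose. Because the orbifold charts may be quotients of toric varieties by finite groups, one works on a smooth atlas and checks that the descent of the monomial equations is governed by the same lattice data; this bookkeeping is the only delicate point, even though the underlying principle is the standard toric one.
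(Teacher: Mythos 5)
Your proof is correct and follows essentially the same route as the paper: the paper's proof simply cites the general toroidal fact that boundary Cartier divisors correspond to integral piecewise-linear functions on the tropicalisation (the local-toric-chart-plus-gluing argument you spell out) and then transports it across the isomorphism $\Sigma\Mlog \cong \Sigma_{0,\upalpha}$ of Theorem~\ref{thm: faithful tropicalisation}. Your additional care about the lattice/orbifold bookkeeping is a reasonable elaboration of what the paper leaves implicit.
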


\begin{proof} By Theorem~\ref{thm: faithful tropicalisation} we have a natural isomorphism $\Sigma \Mlog = \Sigma_{0,\upalpha}$. It is then a general fact about toroidal embeddings that boundary Cartier divisors correspond to integral piecewise-linear functions on the tropicalisation.  
\end{proof}

We arrive at a combinatorial description of the Picard group.

\begin{theorem}[Theorem~\ref{thm: Picard introduction}] \label{thm: Picard} The rational Picard group of $\Mlog$ is generated by divisors corresponding to integral piecewise-linear functions on $\Sigma_{0,\upalpha}$. A complete set of relations between these divisors is obtained by pulling back the WDVV relations from $\Mbarn$.
\end{theorem}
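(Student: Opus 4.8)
The plan is to assemble the statement from results already in hand, transporting information across the embedding \eqref{eqn: Picard injects into class}. For generation, recall that this embedding realises $\PicQ \Mlog$ inside $\ClQ \Mlog$ as the subspace of locally principal classes. The interior $U = M_{0,\upalpha}(\PP^r|H)$ is smooth with $\ClQ(U) = 0$ (Proposition~\ref{prop: generated by boundary strata}), so $\PicQ(U) = \ClQ(U) = 0$; hence the restriction to $U$ of any Cartier class is rationally principal, and every element of $\PicQ \Mlog$ is represented by a Cartier divisor supported on $\partial \Mlog$. By Lemma~\ref{lem: Cartier divisors are PL functions} these boundary Cartier divisors are exactly the divisors $D_\upphi$ attached to integral piecewise-linear functions $\upphi$ on $\Sigma_{0,\upalpha}$, which gives the first assertion.

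For the relations, write $W = \bigoplus_D \Q \cdot D$ for the free module on boundary divisors and let $\mathrm{PL}_\Q$ denote the rational piecewise-linear functions on $\Sigma_{0,\upalpha}$. The assignment $\upphi \mapsto \sum_\uprho \upphi(v_\uprho)\, D_\uprho$ defines a map $\iota \colon \mathrm{PL}_\Q \to W$ whose composite with the surjection $W \twoheadrightarrow \ClQ \Mlog$ is the generation map $\mathrm{PL}_\Q \to \PicQ \hookrightarrow \ClQ$. Since the rays of any polyhedral cone span that cone, a function linear on each cone and vanishing on every ray vanishes identically, so $\iota$ is injective. The relations among the $D_\upphi$ are the kernel of $\mathrm{PL}_\Q \to \PicQ$, which by injectivity of \eqref{eqn: Picard injects into class} equals $\iota^{-1}\big(\ker(W \to \ClQ \Mlog)\big)$; and by Theorem~\ref{thm: main} this latter kernel is spanned by the pullbacks of the WDVV relations from $\Mbarn$.

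It then remains to lift these WDVV relations to $\mathrm{PL}_\Q$, so that they are genuinely relations among the $D_\upphi$. Here I would use that $\Mbarn$ is smooth: its boundary divisors are Cartier and each WDVV relation is an equality of Cartier classes. Pulling back along the stabilisation morphism $\mathsf{st} \colon \Mlog \to \Mbarn$ preserves the Cartier property, so every $\mathsf{st}^\star B$ is a boundary Cartier divisor, hence equals some $D_{\upphi_B}$ by Lemma~\ref{lem: Cartier divisors are PL functions}. Thus each WDVV relation lifts to an element of $\mathrm{PL}_\Q$ mapping under $\iota$ to the corresponding class-group relation; injectivity of $\iota$ upgrades $\iota^{-1}$ of their span to precisely their span in $\mathrm{PL}_\Q$, and we conclude that the complete set of relations among the $D_\upphi$ is the pullback of the WDVV relations.

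I expect the relations step to be the main obstacle, and within it the crux is passing from a containment to an equality. That Picard relations are contained among class-group relations is immediate from the injectivity of \eqref{eqn: Picard injects into class}; the substance is showing the WDVV relations are actually realised inside the Picard group and that they exhaust all Picard relations. The injectivity of $\iota$ — resting on rays spanning their cones, which holds even at the non-simplicial singularities of Section~\ref{sec: class group not Picard} — is what makes this upgrade work, and is the point I would check most carefully.
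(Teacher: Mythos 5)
Your proof is correct and follows essentially the same route as the paper: generation via the embedding of $\PicQ$ into $\ClQ$, triviality of the class group of the interior, and Lemma~\ref{lem: Cartier divisors are PL functions}; relations via injectivity of \eqref{eqn: Picard injects into class}, Theorem~\ref{thm: main}, and smoothness of $\Mbarn$ to realise the pulled-back WDVV relations as relations between boundary \emph{Cartier} divisors. The map $\iota$ and its injectivity, which you spell out, are already implicit in the paper's bijection between boundary Cartier divisors and integral piecewise-linear functions.
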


\begin{proof}
The description of the generators follows from Lemma~\ref{lem: Cartier divisors are PL functions}. The description of the relations follows from the injectivity of \eqref{eqn: Picard injects into class} and the corresponding result for the class group (Theorem~\ref{thm: main}). Note that since $\Mbarn$ is smooth, every boundary relation pulls back to a relation between boundary \emph{Cartier} divisors on $\Mlog$.
\end{proof}

\appendix
\section{Divisors on $\Mbarn$} \label{sec: divisors on Mbarn}
\noindent The following facts can all be found in \cite{KeelCurves}. Fix $n \geqslant 3$. The Grothendieck--Knudsen space $\Mbarn$ is a smooth projective variety of dimension $n-3$. Its Picard group has dimension $2^{n-1} - {n-1 \choose 2} - n$ and is generated by the set of boundary divisors, of which there are $2^{n-1} - 1 - n$. It follows that there are
\[ {n-1 \choose 2} - 1 \]
independent relations amongst the boundary divisors. All such relations are obtained by pulling back the WDVV relations from $\ol{M}_{0,4}$. However, there are $2 {n \choose 4}$ of these, so while they generate all relations, they are not independent.

We now exhibit a complete and independent set of boundary relations. This is used in the proof of Proposition~\ref{prop: dim of Q} above. By the preceding discussion, it suffices to exhibit a linearly independent set of relations of size ${n-1 \choose 2} - 1$. We let
\[ \mathcal{R} \begin{pmatrix} a & b \\ c & d \end{pmatrix} \]
denote the WDVV relation
\[ D(ab | cd) = D(ac | bd ).\]
Note that $\mathcal{R}(M) = \mathcal{R}(M^T)$. For $1 \leqslant i < j \leqslant n$ let $E_{ij} \subseteq \Mbarn$ denote the boundary divisor whose generic point parametrises a curve with two components, one of which contains precisely the markings $x_i,x_j$.

\begin{lemma} \label{lem: boundary appears in RM iff row or column} The boundary divisor $E_{ij}$ appears in the relation $\mathcal{R}(M)$ if and only if $(i j)$ appears as a row or a column of $M$.\end{lemma}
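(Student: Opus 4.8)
The plan is to unwind both sides of the WDVV relation into explicit sums of boundary divisors and simply read off the coefficient of $E_{ij}$. Recall that $D(ab \mid cd)$ denotes the sum of all boundary divisors $\delta_S$ whose associated partition $\{S, S^c\}$ of $\{1,\ldots,n\}$ places $a,b$ on one side and $c,d$ on the other; concretely $S$ ranges over subsets with $\{a,b\} \subseteq S$ and $\{c,d\} \subseteq S^c$, where as usual $\delta_S = \delta_{S^c}$. In this language $E_{ij} = \delta_{\{i,j\}}$ is the boundary divisor attached to the two-element part $\{i,j\}$.

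The key observation is that $\{i,j\}$ has exactly two elements, so a two-element set $\{a,b\}$ is contained in the part $\{i,j\}$ (respectively in its complement) if and only if it equals $\{i,j\}$. Consequently the partition $\{\{i,j\},\{i,j\}^c\}$ separates $\{a,b\}$ from $\{c,d\}$ precisely when $\{a,b\} = \{i,j\}$ or $\{c,d\} = \{i,j\}$; in either case the remaining pair automatically lies in $\{i,j\}^c$, since $a,b,c,d$ are distinct. Thus $E_{ij}$ appears in $D(ab \mid cd)$ if and only if $\{i,j\}$ is one of the two rows $\{a,b\},\{c,d\}$ of $M$. Applying the identical argument to $D(ac \mid bd)$ shows that $E_{ij}$ appears there if and only if $\{i,j\}$ is one of the two columns $\{a,c\},\{b,d\}$ of $M$.

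It remains to combine these two computations into a statement about the relation $\mathcal{R}(M)$, which I would write as $D(ab \mid cd) - D(ac \mid bd) = 0$. Here lies the only point requiring care: since $a,b,c,d$ are distinct, no row of $M$ can coincide with a column (e.g. $\{a,b\} = \{a,c\}$ would force $b=c$), so $E_{ij}$ never appears on both sides simultaneously and no cancellation can occur. Therefore the coefficient of $E_{ij}$ in $D(ab \mid cd) - D(ac \mid bd)$ equals $+1$ if $\{i,j\}$ is a row, $-1$ if $\{i,j\}$ is a column, and $0$ otherwise. In particular this coefficient is nonzero exactly when $\{i,j\}$ appears as a row or a column of $M$, which is the claim. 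There is no genuine obstacle in this argument — it is a direct combinatorial verification, and the only mild subtlety is the elementary no-cancellation check in the final step, which guarantees that ``appears in $\mathcal{R}(M)$'' is unambiguous.
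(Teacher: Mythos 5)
Your argument is correct and is essentially the same as the paper's: both identify the components of $D(ab\mid cd)$ as the partitions separating $\{a,b\}$ from $\{c,d\}$, observe that a two-element part $\{i,j\}$ occurs precisely when it equals one of the rows (resp.\ columns for $D(ac\mid bd)$), and conclude. Your explicit no-cancellation check is a small welcome addition that the paper leaves implicit.
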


\begin{proof} The irreducible components of $D(ab|cd)$ are indexed by splittings of the markings into two parts, with $x_a,x_b$ in the first part and $x_c,x_d$ in the second part. We conclude that $E_{ij}$ is an irreducible component of $D(ab|cd)$ if and only if $\{i,j\}=\{a,b\}$ or $\{i,j\} =\{c,d\}$. The same applies to $D(ac|bd)$, and the result follows.
\end{proof}

With this, the search for a set of linearly independent relations reduces to a game of sudoku.

\begin{proposition} \label{prop: basis for WDVV} The $\mathcal{R}(M)$ in the following table form a basis for the boundary relations in $\Mbarn$.\medskip
\renewcommand{\arraystretch}{0.8}
\[
\begin{tabular}{ c | ccccccccc}
$\mathcal{R}(M)$ & $\begin{pmatrix} 1 & 2 \\ 3 & 4 \end{pmatrix}$ & $\begin{pmatrix} 1 & 3 \\ 4 & 5 \end{pmatrix}$ & $\cdots$ & $\begin{pmatrix} 1 & n\!-\!2 \\ n\!-\!1 & n \end{pmatrix}$ & $\begin{pmatrix} 2 & 3 \\ 4 & 5 \end{pmatrix}$ & $\cdots$ & $\begin{pmatrix} 2 & n\!-\!2 \\ n\!-\!1 & n \end{pmatrix}$ & $\begin{pmatrix} 3 & 4 \\ 5 & 6 \end{pmatrix}$ & $\cdots$
\\
&&&&&&&&& \\ \hline
&&&&&&&&& \\
$E(M)$ & $E_{12}$ & $E_{13}$ & $\cdots$ & $E_{1,n\!-\!2}$ & $E_{23}$ & $\cdots$ & $E_{2,n\!-\!2}$ & $E_{34}$ & $\cdots$
\end{tabular}
\]\medskip
\[
\begin{tabular}{ c | ccccccc}
$\mathcal{R}(M)$ & $\cdots$ & $\begin{pmatrix} 3 & n\!-\! 2 \\ n\!-\! 1 & n \end{pmatrix}$ & $\cdots$ & $\begin{pmatrix} n\!-\!3 & n\!-\!2 \\ n\!-\!1 & n \end{pmatrix}$ & $\begin{pmatrix} n & 1 \\ n\!-\!2 & n\!-\! 1 \end{pmatrix}$ & $\cdots$ & $\begin{pmatrix} n & n\!-\!3 \\ n\!-\!2 & n\!-\! 1 \end{pmatrix}$\\
&&&&&&& \\ \hline
&&&&&&& \\
$E(M)$ & $\cdots$ & $E_{3,n-2}$ & $\cdots$ & $E_{n\!-\!3,n\!-\!2}$ & $E_{1n}$ & $\cdots$ & $E_{n\!-\!3,n}$
\end{tabular}
\]
\end{proposition}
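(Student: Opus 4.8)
The plan is to deduce the statement from a dimension count together with a single linear independence argument. The preamble to this appendix records that the space of boundary relations on $\Mbarn$ has dimension ${n-1 \choose 2}-1$, so it suffices to show that the relations listed in the table are linearly independent. First I would confirm that the table has exactly the right size: the first block contributes one relation $\mathcal{R}\begin{pmatrix} i & j \\ j+1 & j+2 \end{pmatrix}$ for each pair $1 \leqslant i < j \leqslant n-2$, giving ${n-2 \choose 2}$ relations, and the second block contributes the $n-3$ relations $\mathcal{R}\begin{pmatrix} n & i \\ n-2 & n-1 \end{pmatrix}$ for $1 \leqslant i \leqslant n-3$. Since ${n-2 \choose 2}+(n-3)={n-1 \choose 2}-1$, linear independence will promote the set to a basis.

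To organise the independence argument I would attach to each relation $\mathcal{R}(M)$ the \emph{distinguished} divisor $E(M)$ displayed in the bottom row of the table, namely $E_{ij}$ where $(ij)$ is the \emph{top} row of $M$. By the proof of Lemma~\ref{lem: boundary appears in RM iff row or column}, a two-marking divisor indexed by a row of $M$ occurs in $\mathcal{R}(M)=D(ab|cd)-D(ac|bd)$ with coefficient $+1$ and one indexed by a column with coefficient $-1$, all other two-marking divisors cancelling; since the four pairs $\{a,b\},\{c,d\},\{a,c\},\{b,d\}$ are distinct (the entries of every $M$ in the table are distinct), $E(M)$ appears in $\mathcal{R}(M)$ with coefficient exactly $+1$.

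The heart of the matter is to exhibit a total order on the distinguished divisors making the incidence pattern triangular. I would order the first-block divisors $E_{ij}$ by increasing second index $j$, breaking ties by increasing $i$, and place the second-block divisors $E_{in}$ (all of second index $n$) afterwards, ordered by $i$. Using Lemma~\ref{lem: boundary appears in RM iff row or column} one reads the two-marking divisors of each relation straight off the rows and columns of $M$: for $\begin{pmatrix} i & j \\ j+1 & j+2 \end{pmatrix}$ these are $E_{ij},E_{j+1,j+2},E_{i,j+1},E_{j,j+2}$, whose second indices are $j,\,j+2,\,j+1,\,j+2$, all $\geqslant j$ with equality only for $E(M)=E_{ij}$; and for $\begin{pmatrix} n & i \\ n-2 & n-1\end{pmatrix}$ the divisors $E_{in},E_{n-2,n-1},E_{n-2,n},E_{i,n-1}$ occur, of which only $E_{in}$ is itself distinguished (the others fall outside the ranges $j'\leqslant n-2$ and $i'\leqslant n-3$). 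The key claim --- this is the ``sudoku'' --- is that with this order each relation $\mathcal{R}(M)$ meets its own distinguished divisor $E(M)$ together with distinguished divisors strictly later in the order, and no earlier one; dually, $E(M)$ occurs only in relations $\mathcal{R}(M')$ with $E(M') \preceq E(M)$. I would verify this by a short case analysis of when the top-row pair of one matrix reappears as a row or column of another matrix in the table: the two ways of being a row (the second forcing a consecutive pair to arise as a bottom row $\{j'+1,j'+2\}$) and the two ways of being a column, checking finally that no first-block divisor meets a second-block relation and no second-block divisor meets a first-block relation.

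Granting this, the matrix expressing the table's relations in terms of the distinguished divisors is upper triangular with $\pm 1$ on the diagonal, hence invertible, so the relations project to a linearly independent set and are therefore linearly independent; combined with the count of the first paragraph they form a basis. The main obstacle is exactly the triangularity bookkeeping of the third paragraph: the ordering and the precise shapes of the two families of matrices are engineered so that each distinguished divisor is ``first seen'' in its own relation, and the care lies in running the case check across both blocks without missing an incidence.
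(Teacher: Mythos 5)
Your proposal is correct and follows essentially the same route as the paper's proof: count that the table has ${n-1 \choose 2}-1$ entries, pair each relation $\mathcal{R}(M)$ with its distinguished divisor $E(M)$, and use Lemma~\ref{lem: boundary appears in RM iff row or column} to establish a triangular incidence pattern, hence linear independence. The only (immaterial) difference is that you reorder the distinguished divisors by their larger index rather than using the table's left-to-right order, and you additionally record the signs of the coefficients; both versions of the triangularity check go through.
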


\begin{proof} It is straightforward to check that there are precisely ${n-1 \choose 2} - 1$ relations. It therefore suffices to demonstrate linear independence.

In the above table, every relation $\mathcal{R}(M)$ is paired with a boundary divisor $E(M)$. The following property is easily checked using Lemma~\ref{lem: boundary appears in RM iff row or column}: $E(M)$ appears in $\mathcal{R}(M)$, and $E(M)$ does not appear in any $\mathcal{R}(M^\prime)$ occurring to the right of $\mathcal{R}(M)$ in the table.

This implies that the set of relations is linearly independent: projecting onto the subspace based by the $E(M)$, the resulting square matrix is upper triangular with $1$s on the diagonal.	
\end{proof}

\footnotesize{
\bibliographystyle{alpha}
\bibliography{Bibliography.bib}\medskip

Patrick Kennedy-Hunt, University of Cambridge. Email: \href{mailto:pfk21@cam.ac.uk}{pfk21@cam.ac.uk}

Navid Nabijou, Queen Mary University of London. Email: \href{mailto:n.nabijou@qmul.ac.uk}{n.nabijou@qmul.ac.uk}

Qaasim Shafi, University of Birmingham. Email: \href{mailto:m.q.shafi@bham.ac.uk}{m.q.shafi@bham.ac.uk}

Wanlong Zheng, University of Cambridge. Email: \href{mailto:wz302@cam.ac.uk}{wz302@cam.ac.uk}
}
\end{document}